\theoremstyle{definition}
\newtheorem{theo}{Theorem}[section]
\newtheorem{coro}{Corollary}[section]
\newtheorem{defn}{Definition}[section]
\newtheorem{rem}{Remark}[section]
\newtheorem{prop}{Proposition}[section]
\newcommand\blfootnote[1]{%
  \begingroup
  \renewcommand\thefootnote{}\footnote{#1}%
  \addtocounter{footnote}{-1}%
  \endgroup
}
\title{\textbf{QUADRILATERAL LABYRINTH FRACTALS}}
\author{\textbf{Harsha Gopalakrishnan$^{1}$\blfootnote{The first author was supported by MHRD grant.} and Srijanani Anurag Prasad$^{2}$}}
\date{Department of Mathematics and Statistics\\Indian Institute of Technology, Tirupati\\$^{1}$\textit{harshagopal3@gmail.com}\\$^{2}$\textit{srijanani@iittp.ac.in}}
\begin{document}
	
\font\myfont=cmr12 at 20pt

\maketitle

\begin{flushleft}
\begin{abstract}
 In this paper, a class of fractals, called quadrilateral labyrinth fractals, are introduced and studied. They are a special kind of fractals on any quadrilateral on the plane. This type of fractal is motivated by labyrinth fractal on the unit square and triangle, which were already studied. This paper mainly deals with the construction of quadrilateral labyrinth fractal and studying its topological properties.
\end{abstract}
\end{flushleft}

\section{INTRODUCTION}
The labyrinth fractals were first introduced by Cristea and Steinsky[1,2] on a unit square in a plane and various properties like Hausdorff dimension, topological properties were also investigated in the same paper. Labyrinth fractals are generated from labyrinth sets that satisfy tree property, exit property, and corner property. It is observed that between any two points in horizontally and vertically blocked labyrinth fractals, there is a unique arc $a$ of infinite length, and the set of points where no tangents to $a$ exist is dense in $a$. It is also proved that there also exists labyrinth fractal with pairs of points, which are joined by an arc of finite length. A similar concept, namely, triangular labyrinth fractals were introduced and studied by Cristea and Paul Surer[3]. The construction of fractal is based on two triangular pattern systems, and correspondingly, two fractals are obtained. The triangular labyrinth fractals are classified and studied based on the length and nature of arcs between two points in the fractal.  In both triangle case and square case, labyrinth fractals are self-similar dendrites. 
\par
The  concepts of labyrinth fractals on the unit square were extended further in various aspects, such as mixed labyrinth fractals and supermixed labyrinth fractals[4,5]. Mixed labyrinth fractals are constructed by using a sequence of labyrinth patterns. i.e., at different stages of iteration, different patterns are used. Supermixed labyrinth fractals use a finite collection of labyrinth pattern at each stage of iteration. Mixed and supermixed labyrinth fractals are generally not self-similar. The rectangular labyrinth fractals were also introduced and studied in [4].
\par
 The classical labyrinth fractal on a square, introduced by Cristea and Stiensky, have a numerous applications in Physics, such as in planar nanostructures\cite{9}, in the construction of prototypes of ultra-wide band radar antennas\cite{10} and the fractal reconstruction of complicated images, signals and radar backgrounds\cite{11}. Fractal labyrinths are also used in combination with genetic algorithms for the synthesis of big robust antenna arrays and nanoantennas in telecommunication\cite{12}.
\par
The labyrinth fractal can be generalized further to any convex quadrilateral in a   
            plane. Unlike the construction of labyrinth fractal in square and triangle, quadrilateral 
            lacks self-similarity when divided into smaller quadrilaterals. So, this class of fractal  
            cannot have self-similarity. Hence, it is interesting to study the labyrinth fractal on a  convex 
            quadrilateral with the properties precisely the same as the square and triangle  
            case but lacks similarity at each stage of construction. Topological properties of these fractals are also studied in this paper.
\par The paper begins with the construction of a fractal on any quadrilateral. The division to smaller quadrilaterals is motivated from the construction of both triangular and square labyrinths. Here instead of similar quadrilaterals, the obtained set is a set of similar quadrilaterals along with a set of two kinds of parallellograms at each stage of construction. Some preliminary results are also proved in this section.
\par The next section of this paper is devoted to the definition of labyrinth fractals on convex quadrilaterals and their basic properties. The definition coincides exactly with the previous definitions on squares and triangles. i.e., the quadrilateral labyrinth satisfies the tree property, exit property and corner property. The  topological properties are studied in section 4. The study deals with the topological properties of complements of labyrinths on the quadrilaterals, besides the topological properties of labyrinth fractal itself.
\par
The labyrinth fractal in convex quadrilateral can be applied in real-life where there is a lack of self-similarity. In any real life situation, it is not necessary that the basic structure of the study is  always square-shaped or  triangle-shaped. In such a case, the labyrinth fractal on the quadrilateral can be used as it deals with any convex and four-sided polygon in a plane.

\section{CONSTRUCTION OF FRACTAL}
In this section, a fractal on any convex quadrilateral on a plane is constructed. Some preliminary results regarding the construction are also proved. These results will in turn helps in proving various theorems in upcoming sections.
\par
Consider a convex quadrilateral $Q$ on a plane. Divide it into two triangles using the smallest diagonal of the quadrilateral (If it has equal diagonal then choose anyone of them). This diagonal is called the common side for the obtained triangles. The endpoints of the common side are called common points for the obtained triangles. Name the quadrilateral as follows: Starting from any of the common point, move anticlockwise and name the vertices of $Q$ as $Q_1,Q_2,Q_3$ and $Q_4$ respectively.  
 Then $Q=Q_1Q_2Q_3Q_4$ and triangles are $\Delta_1=Q_1Q_2Q_3$ and $\Delta_2=Q_3Q_4Q_1$.
\begin{figure}[tb]
\begin{center}
\includegraphics[width=0.4\linewidth]{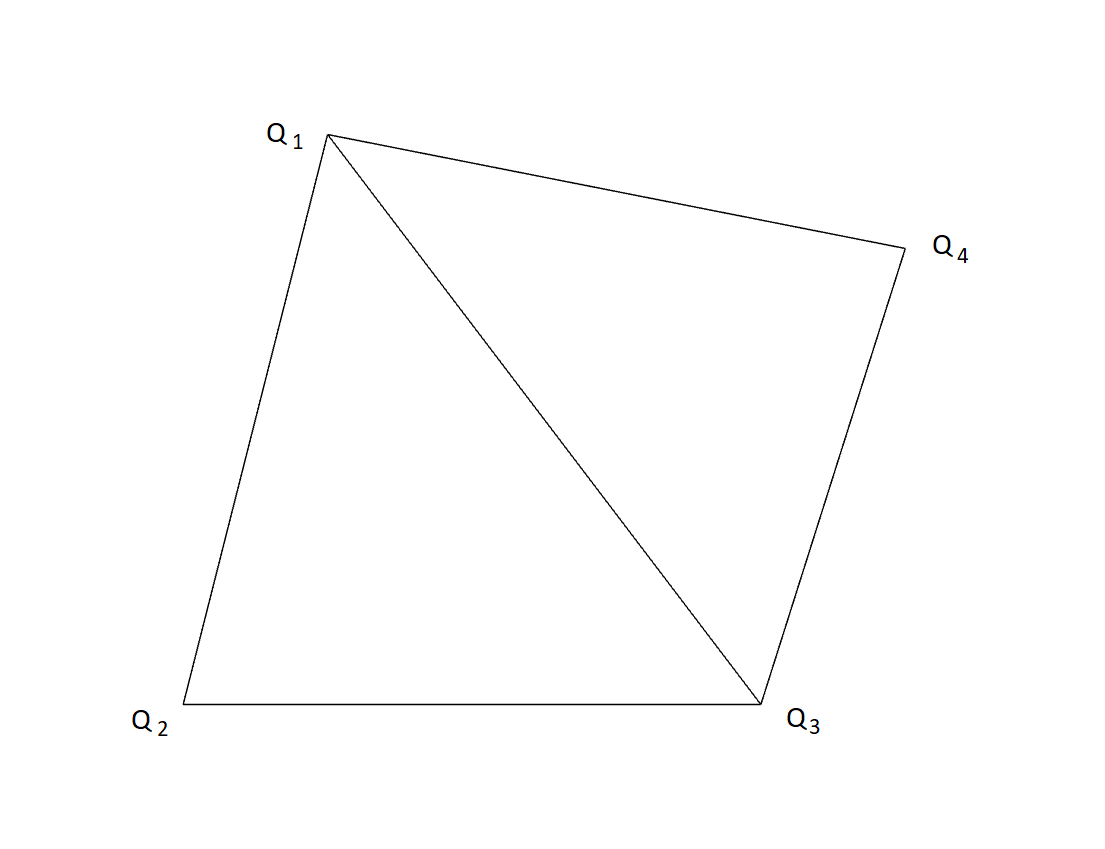}
\end{center}
\caption{This is an example of the naming of a quadrilateral}
\label{firstfig}
\end{figure}

Let $x\in Q$ then either $x\in\Delta_1$ or $x\in\Delta_2$. If $x\in\Delta_1$, and $x=\alpha_1Q_1+\alpha_2Q_2+\alpha_3Q_3$ is the unique representation in $\Delta_1$,
then $x=\alpha_1Q_1+\alpha_2Q_2+\alpha_3Q_3+0Q_4$ is unique representation in $Q$.
If $x\in\Delta_2$, and $x=\alpha_3Q_3+\alpha_4Q_4+\alpha_1Q_1$ is the unique representation in $\Delta_2$,
then $x=\alpha_1Q_1+0Q_2+\alpha_3Q_3+\alpha_4Q_4$ is unique representation in $Q$.
Now if $x$ lies in the line $Q_1Q_3$, then the above two definitions coincide. Hence any $x\in Q$ is uniquely represented as $(\alpha_1,\alpha_2,\alpha_3,\alpha_4)$, where $\alpha_i$ is the coefficient of $Q_i$, for $i=1,2,3,4$ as in the above representation with either $\alpha_2=0$ or $\alpha_4=0$ and $(\alpha_1,\alpha_2,\alpha_3,\alpha_4)$ is called the unique representation of $x$ in $Q$.
If $Q'=R_1R_2R_3R_4$ is a convex quadrilateral inside $Q$, define a map $P_{Q'}:Q \to Q'$ as \\$P_{Q'}(x)=\alpha_1R_1+\alpha_2R_2+\alpha_3R_3+\alpha_4R_4$, where $(\alpha_1,\alpha_2,\alpha_3,\alpha_4)$ is the unique representation of $x$ in $Q$.
\par Let $X$ and $Y$ are two topological spaces. A homeomorphism from $X$ to $Y$ is a bijection $f:X\to Y$ such that both $f$ and $f^{-1}$ are continuous.

\begin{prop}
Let $Q=Q_1Q_2Q_3Q_4$ be a convex quadrilateral and $Q'=R_1R_2R_3R_4$ is a convex quadrilateral inside $Q$. Then the map $P_{Q'}:Q\to Q'$ given by\\ $P_{Q'}(x)=\alpha_1R_1+\alpha_2R_2+\alpha_3R_3+\alpha_4R_4$, where $(\alpha_1,\alpha_2,\alpha_3,\alpha_4)$ is the unique representation of $x$ in $Q$, is a homeomorphism between $Q$ and $Q'$. 
\end{prop}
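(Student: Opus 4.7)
The plan is to exploit the triangle decomposition that underlies the definition of the unique representation in $Q$. Since every $x\in Q$ is written as $(\alpha_1,\alpha_2,\alpha_3,\alpha_4)$ with either $\alpha_2=0$ or $\alpha_4=0$, the restriction of $P_{Q'}$ to the triangle $\Delta_1=Q_1Q_2Q_3$ is nothing other than the unique affine map sending the barycentric triple $(\alpha_1,\alpha_2,\alpha_3)$ in $\Delta_1$ to the corresponding barycentric point $\alpha_1R_1+\alpha_2R_2+\alpha_3R_3$ in $\Delta_1'=R_1R_2R_3$; likewise on $\Delta_2=Q_3Q_4Q_1$ it is the affine map onto $\Delta_2'=R_3R_4R_1$. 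Both of these are classical homeomorphisms (they are invertible affine maps between non-degenerate triangles, with continuous linear inverses).

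Next I would check compatibility on the overlap $\Delta_1\cap\Delta_2=Q_1Q_3$. A point on this diagonal has $\alpha_2=0$ and $\alpha_4=0$ simultaneously, so both affine formulas give $\alpha_1R_1+\alpha_3R_3$, a point on the segment $R_1R_3\subset Q'$. Thus the two partial definitions agree on $Q_1Q_3$, and the pasting (gluing) lemma for two closed subsets immediately yields that $P_{Q'}:Q\to Q'$ is continuous globally.

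For bijectivity, I would define the reverse map $P_Q:Q'\to Q$ using the unique representation of points of $Q'$ (here one uses that $Q'$ is convex, so its diagonal $R_1R_3$ splits it into the two triangles $\Delta_1'$ and $\Delta_2'$, and the same unique-representation discussion given for $Q$ applies verbatim to $Q'$). The triangle-wise inverses of the affine maps in step one are exactly the triangle-wise restrictions of $P_Q$, and they again agree on $R_1R_3$, so $P_Q$ is a well-defined continuous map. A direct check on each triangle shows $P_Q\circ P_{Q'}=\operatorname{id}_Q$ and $P_{Q'}\circ P_Q=\operatorname{id}_{Q'}$, completing the proof that $P_{Q'}$ is a homeomorphism. (Alternatively, once continuity and bijectivity are established, one could simply invoke the fact that a continuous bijection from the compact space $Q$ to the Hausdorff space $Q'$ is automatically a homeomorphism.)

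The only delicate point, and the one I would be most careful about, is the well-definedness at the diagonal: one must verify that the chosen convention (either $\alpha_2=0$ or $\alpha_4=0$) does not create an ambiguity on $Q_1Q_3$ that would be mapped inconsistently in $Q'$. The barycentric calculation above dispels this worry, but it is the step where a naive gluing argument could slip. Everything else is routine linear-algebra/point-set topology.
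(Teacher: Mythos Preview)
Your argument is correct and in fact more careful than the paper's own proof. The paper proceeds by direct verification: injectivity via a case split on which triangle contains $x$ and $y$, surjectivity ``from the definition'', and continuity of $P_{Q'}$ and $P_{Q'}^{-1}$ via the sequential claim that $x_n\to x$ in $Q$ forces $\alpha_{in}\to\alpha_i$. You instead recognise $P_{Q'}|_{\Delta_1}$ and $P_{Q'}|_{\Delta_2}$ as the affine bijections $\Delta_i\to\Delta_i'=R_iR_{i+1}R_{i+2}$, check agreement on $Q_1Q_3$, and invoke the pasting lemma (plus either an explicit inverse or the compact--Hausdorff criterion). The gain is rigour in the continuity step: the paper's implication ``$x_n\to x\Rightarrow\alpha_{in}\to\alpha_i$'' is precisely continuity of the coordinate map $Q\to\mathbb R^4$, true on each closed triangle by affineness but requiring a gluing argument across the diagonal---exactly what you supply and the paper leaves implicit. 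One caution: when you build the reverse map $P_Q:Q'\to Q$ from ``the unique representation of points of $Q'$'', note that the paper's convention ties that phrase to the \emph{shortest} diagonal of the quadrilateral, which for $Q'$ need not be $R_1R_3$; you do specify the $R_1R_3$ diagonal in your parenthetical, so just make clear you are defining this map ad hoc rather than invoking the paper's naming convention---or, as you note, bypass the issue entirely with the compact--Hausdorff argument.
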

\begin{proof}
Let $P_{Q'}(x)=P_{Q'}(y)$ for $x,y\in Q$ with $(\alpha_1,\alpha_2,\alpha_3,\alpha_4), (\beta_1,\beta_2,\beta_3,\beta_4)$ as the unique representation of $x$ and $y$ in $Q$ respectively.\\
Case 1: $x\in\Delta_1=Q_1Q_2Q_3$ and $y\in\Delta_2=Q_1Q_3Q_4$.\\
Here the representation of $x$ and $y$ are $x=(\alpha_1,\alpha_2,\alpha_3,0)$ and $y=(\beta_1,0,\beta_3,\beta_4)$ in $Q$.
Hence $P_{Q'}(x)=P_{Q'}(y)\implies (\alpha_1,\alpha_2,\alpha_3,0)=(\beta_1,0,\beta_3,\beta_4)$ in $Q'$.
Thus $\alpha_1=\beta_1$, $\alpha_2=0$, $\alpha_3=\beta_3$, $\beta_4=0$ and so $x=y$ and lies on the line joining $Q_1$ and ~$Q_3$.\\
Case 2: $x,y\in\Delta_1=Q_1Q_2Q_3$ or $x,y\in\Delta_2=Q_3Q_4Q_1$\\
Then $x=(\alpha_1,\alpha_2,\alpha_3,0)$ and $y=(\beta_1,\beta_2,\beta_3,0)$ or $x=(\alpha_1,0,\alpha_3,\alpha_4)$ and \\$y=(\beta_1,0,\beta_3,\beta_4)$ in $Q$.
$P_{Q'}(x)=P_{Q'}(y)\implies (\alpha_1,\alpha_2,\alpha_3,0)=(\beta_1,\beta_2,\beta_3,0)$ or $(\alpha_1,0,\alpha_3,\alpha_4)=(\beta_1,0,\beta_3,\beta_4)$ in $Q'$.
Either case $\alpha_i=\beta_i$ for $i=1,2,3,4$ and so $x=y$.\\
Thus $P_{Q'}$ is injective. Surjectivity follows from the definition of $P_{Q'}$ and hence $P_{Q'}$ is bijection.
Now, let $(x_n)=\alpha_{1n}Q_1+\alpha_{2n}Q_2+\alpha_{3n}Q_3+\alpha_{4n}Q_4$ be a sequence in $Q$ which converges to $x=\alpha_1Q_1+\alpha_2Q_2+\alpha_3Q_3+\alpha_4Q_4$ in $Q$. 
Then $(\alpha_{1n},\alpha_{2n},\alpha_{3n},\alpha_{4n})$ is a sequence in $\mathbb{R}^4$ converging to $(\alpha_1,\alpha_2,\alpha_3,\alpha_4)$ in $\mathbb{R}^4$. Hence $(\alpha_{in})\to\alpha_i$ for $i=1,2,3,4$.
Thus $(\alpha_{1n}R_1+\alpha_{2n}R_2+\alpha_{3n}R_3+\alpha_{4n}R_4)\to \alpha_1R_1+\alpha_2R_2+\alpha_3R_3+\alpha_4R_4$\\
$i.e,$ $P_{Q'}(x_n)\to P_{Q'}(x)$ and so $P_{Q'}$ is continuous. In a similar way $(P_{Q'})^{-1}$ is also continuous.
Thus $P_{Q'}$ is a homeomorphism.
\end{proof}
\begin{flushleft}
For $m\ge 2$, consider the sets
\end{flushleft}
$A_1=\{(k_1,k_2,k_3,k_4)\in(\mathbb{N}\cup\{0\})^4 : k_1+k_2+k_3=m-1$, $k_4=0$ and $k_2\neq 0\}$\\
$A_2=\{(k_1,k_2,k_3,k_4)\in(\mathbb{N}\cup\{0\})^4 : k_1+k_3+k_4=m-1$, $ k_2=0$ and $k_4\neq 0\}$\\
$A_3=\{(k_1,k_2,k_3,k_4)\in(\mathbb{N}\cup\{0\})^4 : k_1+k_3=m-1$ and $k_2= k_4=0\}$\\
And let
\begin{equation}
 A=A_1\cup A_2\cup A_3
\end{equation}
Define a function $S_m$ on $A$ as follows:\\
$S_m(k_1,k_2,k_3,k_4)=R_1R_2R_3R_4$, where,\\
$R_1= \frac{(k_1+1)Q_1+k_2Q_2+k_3Q_3+k_4Q_4}{m} $;
$R_2= \frac{k_1Q_1+(k_2+1)Q_2+k_3Q_3+k_4Q_4}{m} $;\\
$R_3=\frac{k_1Q_1+k_2Q_2+(k_3+1)Q_3+k_4Q_4}{m}$;
$R_4=\frac{(k_1+1)Q_1+(k_2-1)Q_2+(k_3+1)Q_3+k_4Q_4}{m}$; 
\\if $(k_1,k_2,k_3,k_4)\in A_1$. And,\\
$R_1= \frac{(k_1+1)Q_1+k_2Q_2+k_3Q_3+k_4Q_4}{m} $;
$R_2= \frac{(k_1+1)Q_1+k_2Q_2+(k_3+1)Q_3+(k_4-1)Q_4}{m}$;\\
$R_3=\frac{k_1Q_1+k_2Q_2+(k_3+1)Q_3+k_4Q_4}{m}$;
$R_4=\frac{k_1Q_1+k_2Q_2+k_3Q_3+(k_4+1)Q_4}{m}$;\\ if $(k_1,k_2,k_3,k_4)\in A_2$.
And,\\
$R_1= \frac{(k_1+1)Q_1+k_2Q_2+k_3Q_3+k_4Q_4}{m} $;
$R_2= \frac{k_1Q_1+(k_2+1)Q_2+k_3Q_3+k_4Q_4}{m} $;\\
$R_3=\frac{k_1Q_1+k_2Q_2+(k_3+1)Q_3+k_4Q_4}{m}$;
$R_4=\frac{k_1Q_1+k_2Q_2+k_3Q_3+(k_4+1)Q_4}{m}$;\\ if $(k_1,k_2,k_3,k_4)\in A_3$.\\
Then $S_m(k_1,k_2,k_3,k_4)$ are parellellograms in $Q$ for $(k_1,k_2,k_3,k_4)\in A_1\cup A_2$ and $S_m(k_1,k_2,k_3,k_4)$ are quadrilaterals in $Q$, which are similar to $Q$ for $(k_1,k_2,k_3,k_4)\in\nolinebreak A_3$.
Let $\mathcal{S}_m=S_m(A_1)\cup S_m(A_2)\cup S_m(A_3)$.
The elements of the set\\ $C=\{S_m(m-1,0,0,0),S_m(0,m-1,0,0),S_m(0,0,m-1,0)S_m(0,0,0,m-1)\}\subseteq \mathcal S_m$ are called the corner quadrilaterals.
The elements $S_m(k_1,k_2,k_3,k_4)$ in $\mathcal S_m$ such that atmost two of the $k_i$'s are nonzero are said to be border quadrilaterals.
\par
Now choose a subset $\mathcal{W}_1$ of $\mathcal{S}_m$, which is called the set of white quadrilaterals of order $1$ and $\mathcal{B}_1=\mathcal{S}_m\setminus\mathcal{W}_1$ is called the set of black quadrilaterals of order $1$. 
For $n\ge 2$, the set of white quadrilaterals of order $n$ is given by,\\
$\mathcal{W}_n=\{P_{W_{n-1}}(W_1) : W_1\in\mathcal{W}_1,W_{n-1}\in\mathcal{W}_{n-1}\}$. Then $\mathcal{W}_n\subset\mathcal{S}_{m^n}$ and the set of black squares of order $n$ is given by $\mathcal{B}_n=\mathcal{S}_{m^n}\setminus\mathcal{W}_n$.
For $n\ge 1$, define \\$L_n=\bigcup\limits_{W\in\mathcal{W}_n}{W}$. Since each $L_n$ is closed and bounded, $L_n$ is compact. Thus $\{L_n\}$ is a monotonically decreasing sequence of compact sets. 
Define $\mathcal L_{\infty}=\bigcap\limits_{n=1}^{\infty}L_n$, which is the fractal set in $Q$.
\begin{prop}
Let $Q=Q_1Q_2Q_3Q_4$ be a convex quadrilateral and $\mathcal{S}_m$ is defined as above for any $m\ge 2$. Then $P_{Q'}\circ P_{Q''}=P_S$ in $Q$, where $S=P_{Q'}(Q'')$ and $Q''\in \mathcal S_n$ for any $n\ge m$.
\end{prop}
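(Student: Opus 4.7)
The plan is to exploit the piecewise-affine nature of $P_{Q'}$: although $P_{Q'}$ is not affine on all of $Q$, its restriction to each of the triangles $\Delta_1=Q_1Q_2Q_3$ and $\Delta_2=Q_3Q_4Q_1$ is affine. Indeed, for $x\in\Delta_1$ the unique representation has $\alpha_4=0$, and $P_{Q'}(x)=\alpha_1Q_1'+\alpha_2Q_2'+\alpha_3Q_3'$ is precisely the unique affine extension from the vertices; the analogous statement holds on $\Delta_2$. As a consequence, $P_{Q'}$ commutes with any convex combination whose summands all lie in a common $\Delta_j$, and this is the hinge on which the whole argument turns.

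With this in hand, I would fix $Q''=R_1R_2R_3R_4\in\mathcal{S}_n$, write $S_i=P_{Q'}(R_i)$ so that $S=S_1S_2S_3S_4$, and take $x\in Q$ with unique representation $(\alpha_1,\alpha_2,\alpha_3,\alpha_4)$ in $Q$. By the definition of $P_{Q''}$ one has $P_{Q''}(x)=\sum_{i=1}^{4}\alpha_iR_i$, so the core verification is that the non-zero summands of this combination all lie in a single triangle. This is checked directly from the explicit vertex formulas defining $S_m$: if $(k_1,k_2,k_3,k_4)\in A_1$ then $k_4=0$ forces all four $R_i\in\Delta_1$; if $(k_1,k_2,k_3,k_4)\in A_2$ then $k_2=0$ forces all four $R_i\in\Delta_2$; and if $(k_1,k_2,k_3,k_4)\in A_3$ then $R_1,R_3$ lie on the common diagonal $Q_1Q_3$, $R_2\in\Delta_1$, and $R_4\in\Delta_2$, so that the constraint $\alpha_2\alpha_4=0$ forces at most one of the two off-diagonal vertices $R_2,R_4$ to contribute and keeps the whole combination inside a single triangle.

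Once this containment is established, the affinity of $P_{Q'}$ on the relevant triangle yields
\[
P_{Q'}\bigl(P_{Q''}(x)\bigr)=P_{Q'}\Bigl(\sum_{i=1}^{4}\alpha_iR_i\Bigr)=\sum_{i=1}^{4}\alpha_iP_{Q'}(R_i)=\sum_{i=1}^{4}\alpha_iS_i=P_S(x),
\]
where the final equality is the defining formula for $P_S(x)$ using the unique representation of $x$ in $Q$. The only delicate case is $(k_1,k_2,k_3,k_4)\in A_3$, in which $Q''$ straddles the diagonal and where the hypothesis $\alpha_2\alpha_4=0$ has to be invoked to keep $P_{Q''}(x)$ in a single triangle; for $A_1$ and $A_2$ the containment is automatic since $Q''\subseteq\Delta_1$ and $Q''\subseteq\Delta_2$, respectively.
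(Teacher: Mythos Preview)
Your argument is correct and follows essentially the same route as the paper: both proofs split into the three cases $Q''\in S_n(A_1)$, $Q''\in S_n(A_2)$, $Q''\in S_n(A_3)$ (the paper phrases these as ``$Q''$ in $\Delta_1$'', ``$Q''$ in $\Delta_2$'', ``$Q''$ along the diagonal''), and in each case verify that the relevant convex combination stays inside a single triangle so that the two sides agree. The only difference is presentational: the paper expands all coordinates explicitly and checks equality term by term, whereas you invoke the affinity of $P_{Q'}|_{\Delta_j}$ once and let it absorb the computation, which is cleaner but amounts to the same calculation.
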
 
\begin{proof}
Let $Q'=R_1R_2R_3R_4$ and $Q''=T_1T_2T_3T_4$.
Let $T_i=\alpha_1^iQ_1+\alpha_2^iQ_2+\alpha_3^iQ_3+\alpha_4^iQ_4$ be the unique representation of $T_i$ in $Q$ for $i=1,2,3,4$.
Choose $x\in Q$. WLOG suppose that $x\in\Delta_1$ and let $x=\beta_1Q_1+\beta_2Q_2+\beta_3Q_3+0Q_4$ be the unique representation of $x$ in $Q$.\\
\textbf{Case 1:}$Q''$ in $\Delta_1$.
In this case $\alpha_4^i=0$ for all $i=1,2,3,4$.
 Then,
\begin{equation*}
\begin{split}
P_{Q'}\circ P_{Q''}(x)&=P_{Q'}\circ P_{Q''}(\beta_1Q_1+\beta_2Q_2+\beta_3Q_3)\\
&=P_{Q'}(\beta_1T_1+\beta_2T_2+\beta_3T_3)\\
&=P_{Q'}(\beta_1(\alpha_1^1Q_1+\alpha_2^1Q_2+\alpha_3^1Q_3)+\beta_2(\alpha_1^2Q_1+\alpha_2^2Q_2+\alpha_3^2Q_3)+\beta_3(\alpha_1^3Q_1+\alpha_2^3Q_2+\alpha_3^3Q_3))\\
&=P_{Q'}((\beta_1\alpha_1^1+\beta_2\alpha_1^2+\beta_3\alpha_1^3)Q_1+(\beta_1\alpha_2^1+\beta_2\alpha_2^2+\beta_3\alpha_2^3)Q_2+(\beta_1\alpha_3^1+\beta_2\alpha_3^2+\beta_3\alpha_3^3)Q_3)\\
&=(\beta_1\alpha_1^1+\beta_2\alpha_1^2+\beta_3\alpha_1^3)R_1+(\beta_1\alpha_2^1+\beta_2\alpha_2^2+\beta_3\alpha_2^3)R_2+(\beta_1\alpha_3^1+\beta_2\alpha_3^2+\beta_3\alpha_3^3)R_3
\end{split}
\end{equation*}
Note that in the above step we can apply $P_{Q'}$ since sum of coefficients of $Q_i's$ are ~$1$ and coefficient of $Q_4$ is zero. The vertices of $S$ are\\
\begin{equation*}
\begin{split}
S_1= P_{Q'}(T_1)=P_{Q'}(\alpha_1^1Q_1+\alpha_2^1Q_2+\alpha_3^1Q_3)=\alpha_1^1R_1+\alpha_2^1R_2+\alpha_3^1R_3\\
S_2= P_{Q'}(T_2)=P_{Q'}(\alpha_1^2Q_1+\alpha_2^2Q_2+\alpha_3^2Q_3)=\alpha_1^2R_1+\alpha_2^2R_2+\alpha_3^2R_3\\
S_3= P_{Q'}(T_3)=P_{Q'}(\alpha_1^3Q_1+\alpha_2^3Q_2+\alpha_3^3Q_3)=\alpha_1^3R_1+\alpha_2^3R_2+\alpha_3^3R_3 \\
S_4= P_{Q'}(T_4)=P_{Q'}(\alpha_1^4Q_1+\alpha_2^4Q_2+\alpha_3^4Q_3)=\alpha_1^4R_1+\alpha_2^4R_2+\alpha_3^4R_3
\end{split}
\end{equation*}
Thus,
\begin{equation*}
\begin{split} 
P_{S}(x)&=P_S(\beta_1Q_1+\beta_2Q_2+\beta_3Q_3)\\
&=\beta_1S_1+\beta_2S_2+\beta_3S_3\\
&=\beta_1(\alpha_1^1R_1+\alpha_2^1R_2+\alpha_3^1R_3)+\beta_2(\alpha_1^2R_1+\alpha_2^2R_2+\alpha_3^2R_3)+\beta_3(\alpha_1^3R_1+\alpha_2^3R_2+\alpha_3^3R_3)\\
&=(\beta_1\alpha_1^1+\beta_2\alpha_1^2+\beta_3\alpha_1^3)R_1+(\beta_1\alpha_2^1+\beta_2\alpha_2^2+\beta_3\alpha_2^3)R_2+(\beta_1\alpha_3^1+\beta_2\alpha_3^2+\beta_3\alpha_3^3)R_3\\
&=P_{Q'}\circ P_{Q''}(x)
\end{split}
\end{equation*}
\textbf{Case 2:}$Q''$ in $\Delta_2$.
In this case $\alpha_2^i=0$ for all $i=1,2,3,4$ and $P_{Q'}\circ P_{Q''}(x)=P_S(x)$ follows same as in case $1$. \\
\textbf{Case 3:} $Q''$ along diagonal.
Here $\alpha_2^1=\alpha_4^1=0$, $\alpha_2^3=\alpha_4^3=0$, $\alpha_4^2=0$ and $\alpha_2^4=0$.\\ In this case also,  $P_{Q'}\circ P_{Q''}(x)=P_S(x)$ follows same as in case $1$.\\
In each case  $P_{Q'}\circ P_{Q''}(x)=P_S(x) $.
Hence  $P_{Q'}\circ P_{Q''}=P_S$
\end{proof}
\begin{rem}
In Proposition $2.2$, the position of $Q'$ does not matter in the calculation.
\end{rem}
\begin{rem}
Note that Proposition $2.2$ is valid only if $Q''\in \mathcal S_m$. \\
Let  $Q=Q_1Q_2Q_3Q_4$, where $Q_1=(0,1)$, $Q_2=(0,0)$, $Q_3=(1,0)$ and $Q_4=(1,1)$\\
and $Q'=R_1R_2R_3R_4$, where $R_1=(0,\frac{1}{4})$, $R_2=(0,0)$, $R_3=(\frac{1}{4},0)$ and $R_4=(\frac{1}{4},\frac{1}{4})$\\
and $Q''=T_1T_2T_3T_4$, where $T_1=(\frac{2}{4},\frac{3}{4})$, $T_2=(\frac{1}{4},\frac{1}{4})$, $T_3=(\frac{3}{4},\frac{1}{4})$ and $T_4=(\frac{3}{4},\frac{3}{4})$\\
Then,
\begin{equation*}
\begin{split}
T_1=\frac{1}{2}(0,1)+0(0,0)+\frac{1}{4}(1,0)+\frac{1}{4}(1,1)\\
T_2=\frac{1}{4}(0,1)+\frac{1}{2}(0,0)+\frac{1}{4}(1,0)+0(1,1)\\
T_3=\frac{1}{4}(0,1)+0(0,0)+\frac{3}{4}(1,0)+0(1,1)\\
T_4=\frac{1}{4}(0,1)+0(0,0)+\frac{1}{4}(1,0)+\frac{1}{2}(1,1)\\
\end{split}
\end{equation*}
Let $x=(\frac{1}{2},\frac{1}{4})=\frac{1}{4}(0,1)+\frac{1}{4}(0,0)+\frac{1}{2}(1,0)+0(1,1)\in Q$\\
Hence,
\begin{equation*}
\begin{split}
P_{Q'}\circ P_{Q''}(x)&=P_{Q'}(P_{Q''}(\frac{1}{4}(0,1)+\frac{1}{4}(0,0)+\frac{1}{2}(1,0)+0(1,1)))\\
&=P_{Q'}(\frac{1}{4}(\frac{2}{4},\frac{3}{4})+\frac{1}{4}(\frac{1}{4},\frac{1}{4})+\frac{1}{2}(\frac{3}{4},\frac{1}{4})+0(\frac{3}{4},\frac{3}{4}))\\
&=P_{Q'}(\frac{5}{16}(0,1)+\frac{1}{8}(0,0)+\frac{1}{2}(1,0)+\frac{1}{16}(1,1))
\end{split}
\end{equation*}
Note that  R.H.S is not well defined. This is because $Q''\not\in \mathcal S_m$.
\end{rem}
\section{QUADRILATERAL LABYRINTH FRACTALS}
In this section, a set of white quadrilaterals, called the labyrinth set, is chosen with some conditions. The corresponding fractal generated from this set is said to be quadrilateral labyrinth fractal. Besides that, some theorems regarding the labyrinth set and labyrinth fractals are also proved. Some examples of labyrinth sets are also given. This section requires some basic concepts in graph theory, which are included at the beginning of this section. 
\par
A graph $\mathcal{G}=(\mathcal{V}(\mathcal{G}),\mathcal{E}(\mathcal{G}))$ consists of a set of vertices $\mathcal{V}(\mathcal{G})$ and a set of edges $\mathcal{E}(\mathcal{G})$, where $\mathcal{E}(\mathcal{G})$ is a subset of the unordered pairs of $\mathcal{V}(\mathcal{G})$. If $\{v_1,v_2\}\in\mathcal{E}(\mathcal{G})$ then the two vertices $v_1$ and $v_2$ are said to be adjacent. A path is a sequence of pairwise distinct vertices $v_1,v_2,...,v_k$, $k\ge 1$ such that for every $j\in\{1,2,...,k-1\}$, the vertices $v_j$ and $v_{j+1}$ are adjacent. The vertices $v_1$ and $v_k$ are called initial and terminal vertices of the path respectively. If there exist an edge that connects initial and terminal vertex of a path, then the path is called a cycle (provided that  $k>2$). A graph is  said to be connected if there exist a path between any two vertices. A connected graph having no cycle is said to be a tree.
\par
For $n\ge 1$, the graph of $\mathcal{W}_n$ is defined as the graph with vertex set $\mathcal{V}(\mathcal{G}(\mathcal{W}_n))$ as the set of white quadrilaterals in $\mathcal{W}_n$ and the edge set  $\mathcal{E}(\mathcal{G}(\mathcal{W}_n))$ as the unordered pair of white quadrilaterals in $\mathcal{W}_n$, that have a common side. Such a graph is denoted by  $\mathcal{G}(\mathcal{W}_n)=\mathcal{G}(\mathcal{V}(\mathcal{G}(\mathcal{W}_n)),\mathcal{E}(\mathcal{G}(\mathcal{W}_n)))$. 
\par
For $n\ge 1$, the graph of $\mathcal{B}_n$ is defined as the graph with vertex set $\mathcal{V}(\mathcal{G}(\mathcal{B}_n))$ as the set of black quadrilaterals in $\mathcal{B}_n$ and the edge set  $\mathcal{E}(\mathcal{G}(\mathcal{B}_n))$ as the unordered pair of black quadrilaterals in $\mathcal{W}_n$, that have a common side or a common vertex. Such a graph is denoted by  $\mathcal{G}(\mathcal{B}_n)=\mathcal{G}(\mathcal{V}(\mathcal{G}(\mathcal{B}_n)),\mathcal{E}(\mathcal{G}(\mathcal{B}_n)))$. 
\begin{defn}
Let $m\ge 4$ and $\mathcal W_1\subseteq \mathcal S_m$. Then $\mathcal{W}_1$ is an $m\times m$ - quadrilateral labyrinth set if it satisfies the following properties:
\begin{enumerate}
\item \textbf{Tree Property:} $\mathcal G(\mathcal W_1)$ is a tree.
\item \textbf{Exit Property:} There exist exactly one $(k_1,k_2,0,0)\in A$ such that\\ $S_m(k_1,k_2,0,0)\in\mathcal W_1$ and $S_m(0,0,k_2,k_1)\in\mathcal W_1$ and exactly one\\ $(k_1,0,0,k_4)\in A$ such that  $S_m(k_1,0,0,k_4)\in W_1$ and $S_m(0,k_1,k_4,0)\in W_1$, where $A$ is given as in $(2.1)$. In this case  $S_m(k_1,k_2,0,0)$ is called the left exit, $S_m(0,0,k_1,k_2)$ is the right exit, $S_m(k_1,0,0,k_4)$ is the top exit and $S_m(0,k_1,k_4,0)$ is the bottom exit.
\item  \textbf{Corner Property:} If there is a white quadrilateral in $\mathcal W_1$ containing any of the vertex $Q_i (i=1,2,3,4)$ of $Q$, then the white quadrilateral in $\mathcal{S}_m$ containing the diagonally opposite vertex of $Q_i$ should not be in $\mathcal{W}_1$. i.e. $\mathcal{W}_1$ contains atmost one element from each of the sets\\
$ \{S_m(m-1,0,0,0),S_m(0,0,m-1,0)\}$ and  $\{S_m(0,m-1,0,0),S_m(0,0,0,m-~1)\}$.
\end{enumerate}
\end{defn}
\begin{defn}
If $\mathcal W_1\subseteq \mathcal S_m$ is a quadrilateral labyrinth set in the quadrilateral $Q$, the left side of $Q$ is defined as the side of $Q$ which contains the left exit. Right, bottom and top sides of $Q$ are defined analogously. The vertex of $Q$ at the intersection of top and left side is called top-left vertex. Top-right vertex, bottom-left vertex and bottom-right vertex are defined analogously. The corner quadrilateral containing top-left vertex is called top-left corner. Analogously, top-right corner, bottom-left corner and bottom-right corner are defined. 
\end{defn}

\begin{prop}
If $\mathcal{W}_1$ is a quadrilateral labyrinth set, no corner can be an exit of two adjacent sides. 
\end{prop}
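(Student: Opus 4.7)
The plan is to proceed by contradiction: assume that some corner quadrilateral serves as the exit for two adjacent sides of $Q$, and then extract a violation of the corner property. There are four corner quadrilaterals, one per vertex $Q_i$, and by the symmetry of the labelling it suffices to handle one in detail and remark that the others are analogous. I would start with $S_m(m-1,0,0,0)$, the corner at $Q_1$, which sits on the two sides that carry the left and the top exits.

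The main step is then purely combinatorial. Suppose $S_m(m-1,0,0,0)$ were simultaneously the left and the top exit. Reading off the first clause of the exit property with $(k_1,k_2,0,0)=(m-1,0,0,0)$, the matching right exit is forced to be $S_m(0,0,k_2,k_1)=S_m(0,0,0,m-1)$, namely the corner containing $Q_4$. Reading the second clause with $(k_1,0,0,k_4)=(m-1,0,0,0)$, the matching bottom exit is forced to be $S_m(0,k_1,k_4,0)=S_m(0,m-1,0,0)$, namely the corner containing $Q_2$. Hence both $S_m(0,m-1,0,0)$ and $S_m(0,0,0,m-1)$ must lie in $\mathcal{W}_1$. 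But these are exactly the two elements of the pair $\{S_m(0,m-1,0,0),S_m(0,0,0,m-1)\}$ singled out by the corner property, which permits at most one of them to be white. This is the desired contradiction.

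The remaining three possibilities, the corner at $Q_2$ being both left and bottom exit, at $Q_3$ being both right and bottom exit, and at $Q_4$ being both right and top exit, all go through by the same two-line bookkeeping: in each case the two partner-exits forced by the exit property turn out to be the corners at the other diagonal pair of vertices, which the corner property forbids. I do not foresee a real obstacle here; the whole argument is just a careful match between the index tuples in the exit property and the diagonal pairs $\{Q_1,Q_3\}$, $\{Q_2,Q_4\}$ appearing in the corner property, and the only place where care is needed is in consistently tracking which vertex $Q_i$ is encoded by which tuple $S_m(\cdot,\cdot,\cdot,\cdot)$.
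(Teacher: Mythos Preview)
Your argument is correct and is essentially the paper's own proof: a contradiction in which the exit property forces the two corners at the opposite diagonal into $\mathcal{W}_1$, violating the corner property. The only difference is cosmetic---the paper phrases it with the geometric names (``bottom-left corner'' forces ``bottom-right'' and ``top-left'') while you work directly with the index tuples $S_m(\cdot,\cdot,\cdot,\cdot)$.
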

\begin{proof}
Suppose the result is not true. WLOG suppose the bottom-left corner is an exit for both bottom side and left side. Then by the exit property, bottom-right corner is the right exit and the top-left corner is the top exit and so they belongs to $\mathcal W_1$. It contradicts the corner property. Thus no corner can be an exit of two adjacent sides. 
\end{proof}
\begin{prop}
If $\mathcal{W}_1$ is an $m\times m$ - quadrilateral labyrinth set, then $\mathcal{W}_n$ is an $m^n\times m^n$ - quadrilateral labyrinth set for all $n\ge 1$. 
\end{prop}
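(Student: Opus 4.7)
The plan is to induct on $n$, with the base case $n=1$ being the hypothesis. For the inductive step, I assume $\mathcal W_{n-1}$ is an $m^{n-1}\times m^{n-1}$-quadrilateral labyrinth set and exploit the decomposition
$$\mathcal W_n=\{P_{W_{n-1}}(W_1):W_1\in\mathcal W_1,\ W_{n-1}\in\mathcal W_{n-1}\}.$$
The key structural observation is that for each fixed $W_{n-1}\in\mathcal W_{n-1}$, the family $\{P_{W_{n-1}}(W_1):W_1\in\mathcal W_1\}$ is, via the homeomorphism $P_{W_{n-1}}$ of Proposition 2.1, a faithful copy of the pattern $\mathcal W_1$ sitting inside $W_{n-1}$; Proposition 2.2 ensures that the coordinates of these smaller tiles are consistent with the partition $\mathcal S_{m^n}$ of $Q$, so the tree, exit, and corner conditions at level $n$ can be read off from the corresponding conditions at levels $1$ and $n-1$.

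For the tree property, I would observe that $\mathcal G(\mathcal W_n)$ is obtained from $\mathcal G(\mathcal W_{n-1})$ by ``blowing up'' each vertex $W_{n-1}$ into a copy of the tree $\mathcal G(\mathcal W_1)$ and inserting exactly one bridging edge across each shared side of adjacent blocks $W_{n-1},W'_{n-1}$. The bridge exists and is unique because the exit property puts a single white quadrilateral on each side of $\mathcal W_1$, and the matching condition---the left exit $(k_1,k_2,0,0)$ forcing the right exit $(0,0,k_2,k_1)$, and the analogous top/bottom pairing---ensures that the two small exit tiles lying on either side of the shared edge of $W_{n-1}$ and $W'_{n-1}$ meet in a full common side rather than only at a vertex. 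A putative cycle in $\mathcal G(\mathcal W_n)$ then contracts either to a cycle inside a single block (contradicting the tree property of $\mathcal G(\mathcal W_1)$) or to a cycle in $\mathcal G(\mathcal W_{n-1})$ (contradicting the inductive hypothesis); connectedness is immediate from the same picture. The exit property is handled analogously: a left exit of $\mathcal W_n$ must sit inside the unique left exit block of $\mathcal W_{n-1}$ and be the left exit of the embedded copy of $\mathcal W_1$, so uniqueness and the index-matching propagate through the composition of projections. The corner property is the cleanest: a $Q_i$-corner of $\mathcal W_n$ is white iff both the $Q_i$-corner of $\mathcal W_{n-1}$ and the $Q_i$-corner of $\mathcal W_1$ are white, so the level-$1$ corner property already excludes diagonally opposite white corners at level $n$.

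The principal obstacle is verifying that adjacent blocks $W_{n-1},W'_{n-1}\in\mathcal W_{n-1}$ are joined by exactly one edge of $\mathcal G(\mathcal W_n)$, which is precisely where the ``matching'' form of the exit property earns its keep. Without the matching, the two exit tiles from neighbouring blocks could touch only at a vertex, severing the link between the two embedded copies of $\mathcal G(\mathcal W_1)$ and breaking connectedness; with it, they share a full side and contribute exactly one edge, so the blown-up graph remains a tree. The main technical content is therefore a coordinate computation: tracing how $P_{W_{n-1}}$ sends the indexed exit tiles of $\mathcal W_1$ to quadrilaterals in $\mathcal S_{m^n}$ along the shared side of $W_{n-1}$ and $W'_{n-1}$, and confirming that the matched index pairs $(k_1,k_2,0,0)$ and $(0,0,k_2,k_1)$ produce tiles with a common edge rather than merely a common vertex.
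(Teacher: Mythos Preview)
Your approach is correct and matches the paper's: both induct on $n$, view $\mathcal W_n$ as copies of $\mathcal W_1$ sitting inside each block of $\mathcal W_{n-1}$, use the exit-matching to join adjacent blocks by a unique bridge for the tree property, and read off the exit and corner properties from the composition. The only imprecision is your dichotomy that a cycle in $\mathcal G(\mathcal W_n)$ contracts ``to a cycle in $\mathcal G(\mathcal W_{n-1})$''---it contracts to a closed walk, which in a tree must repeat an edge and hence (by uniqueness of bridges) force the original cycle to repeat a vertex; the paper handles this via an explicit projection $t(a)$ and a sequence $b_k$ tracking block changes, but the substance is identical.
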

\begin{proof}
The proof is by induction. For $n=1$, it is clear that $\mathcal W_1$ is an $m\times m$~ - quadrilateral labyrinth set. Suppose $\mathcal W_{n-1}$ is an $m^{n-1}\times m^{n-1}$ - quadrilateral labyrinth set. To prove $\mathcal{W}_n$  is an $m^n\times m^n$ - quadrilateral labyrinth set, it is enough to prove that $\mathcal W_n$ satisfies tree property, exit property and corner property.
\renewcommand{\theenumi}{\roman{enumi}}
\begin{enumerate}
\item $\mathcal{G}(\mathcal{W}_n)$ is a tree.\\
The $n^{th}$ stage is obtained by replacing each white quadrilateral in $(n-1)^{th}$ stage by a pattern similar to the first stage as given in definition. The exit property in the first stage and the connectedness of $(n-1)^{th}$ stage together gives the connectedness of  $\mathcal{G}(\mathcal{W}_n)$. To prove the graph is acyclic, suppose for the contrary that there is a cycle $C=\{a_0,a_1,...a_s\}$ in $\mathcal{G}(\mathcal{W}_n)$. For each $a\in\mathcal{W}_n$ let $t(a)$ be the white quadrilateral in $\mathcal{W}_{n-1}$, which contains $a$. Let $j_0=0$,$b_0=t(a_0)$ and $j_k=$ min $\{i : t(a_i)\neq b_{k-1}, j_{k-1}<i\leq s\}$, $b_k=t(a_{j_k})$ for $k\geq 1$. Choose $r$ minimal such that the set $\{i : t(a_i)\neq b_r, j_r<i\leq s\}$ is empty. For $i=1,2,...,r$ the vertex $b_{i-1}$ is a neighbour of $b_i$ in $\mathcal{G}(\mathcal{W}_{n-1})$.  The set $\{b_0,b_1,...,b_r\}$ cannot contain a cycle by the induction hypothesis. Also, if $r=0$, then all $a_i$'s are contained in $b_0$ and it will contradicts the fact that $\mathcal{G}(\mathcal{W}_1)$  is a tree. Hence $r\geq 1$. Thus the graph induced by $\mathcal{G}(\mathcal{W}_{n-1})$ on the set $\{b_0,b_1...,b_r\}$ is a tree with more than one vertex. This implies that the cycle $C$ returns to $a_0$ through the same side where it leaves $a_0$. But the exit property gives that $a_i=a_j$ for some $j\neq i$ ; $a_i,a_j\in\{a_0,a_1,...,a_s\}$, which is a contradiction. Hence $\mathcal{G}(\mathcal{W}_n)$ is a tree.
\item Since the exit property is satisfied in the $(n-1)^{th}$ and in the first stage, we have a unique exit in each side of the quadrilateral in both case. This ensures the exit property in the $n^{th}$ stage.
\item Upto the $(n-1)^{th}$ stage, corner property is satisfied. The corner in the $(n-1)^{th}$ stage if it exist will only contribute to the corner in the $n^{th}$ stage. So the corner property also clearly satisfied.
\end{enumerate}
\end{proof}
\begin{rem}
For a quadrilateral labyrinth set $\mathcal W_1$, it is shown that $\mathcal G(\mathcal W_n)$ is connected. Hence $L_n$ is connected for any $n\ge 1$. Thus $\{L_n\}$ is a decreasing sequence of nonempty compact connected sets. 
\end{rem}
\begin{defn}
If $\mathcal W_1$ is a quadrilateral labyrinth set, then the limit set\\ $\mathcal L_{\infty}=\bigcap\limits_{n=1}^{\infty}L_n=\bigcap\limits_{n=1}^{\infty}\bigcup\limits_{W\in\mathcal{W}_n}{W}$ is called the quadrilateral labyrinth fractal.\\ Figures $2,3,$ and $4$ show some examples of quadrilateral labyrinth fractals.
\end{defn}
\begin{figure}[tb]

\label{image1}

\includegraphics[width=\linewidth]{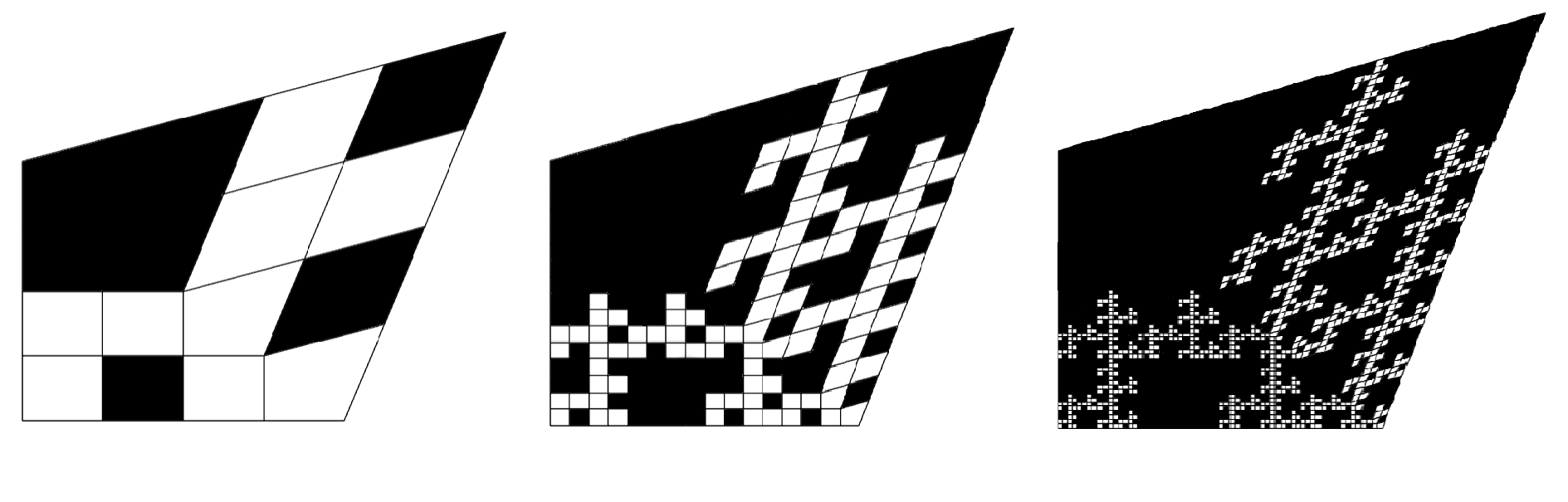}
\includegraphics[width=\linewidth]{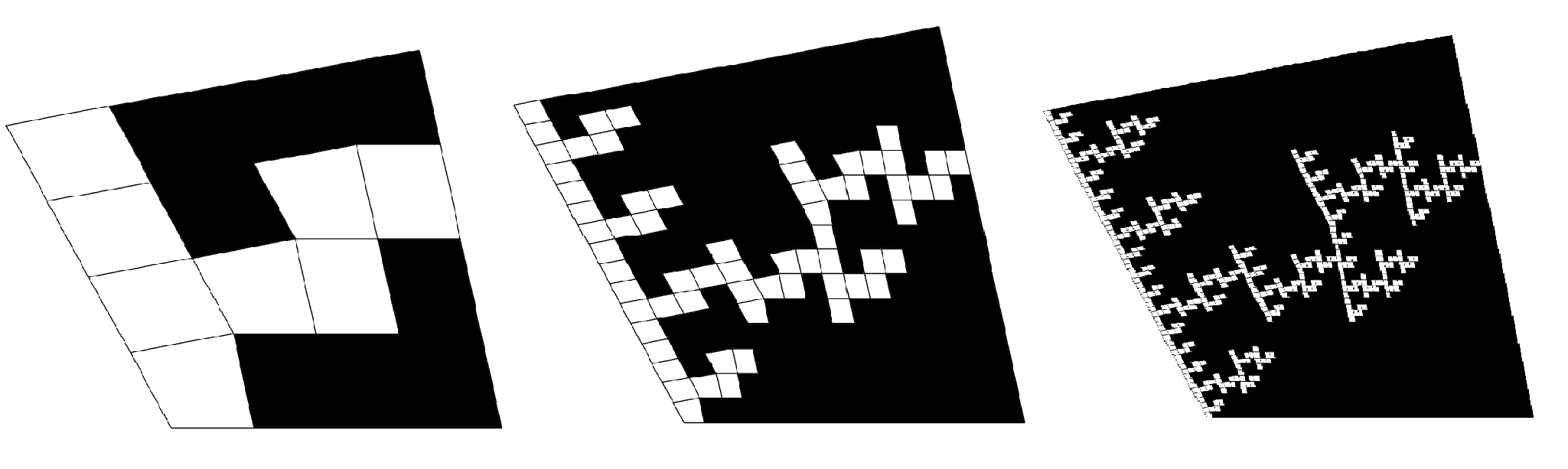}
\caption{First three stages of two different $4\times 4$-quadrilateral labyrinth fractals}
\end{figure}
\begin{figure}[tb]

\label{image1}

\includegraphics[width=\linewidth]{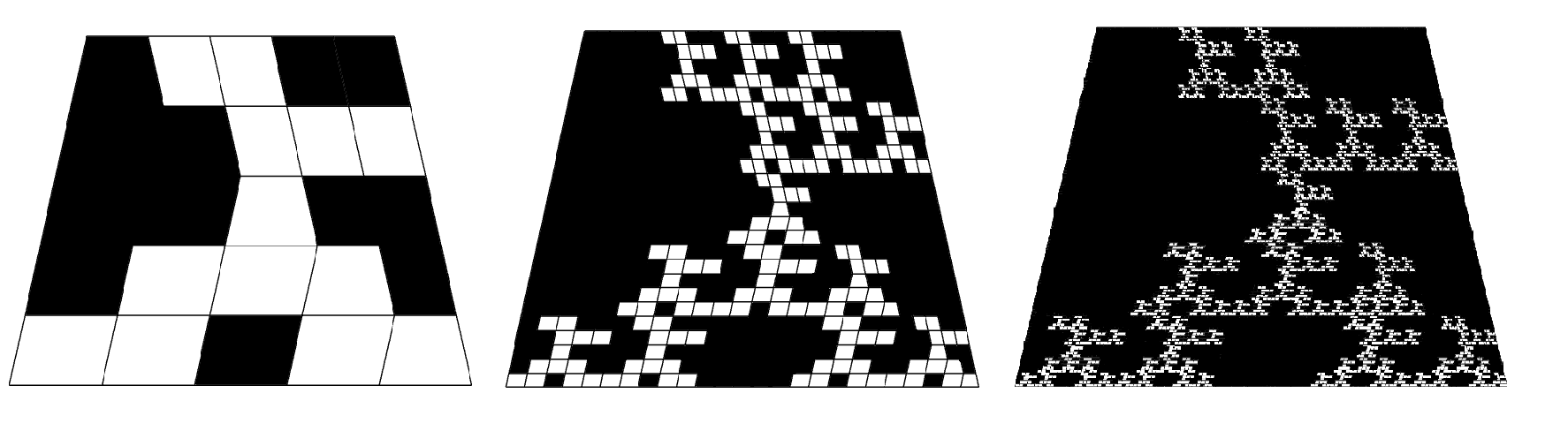}
\caption{First three stages of a $5\times 5$-quadrilateral labyrinth fractal}
\end{figure}
\begin{figure}[tb]

\label{image1}

\includegraphics[width=\linewidth]{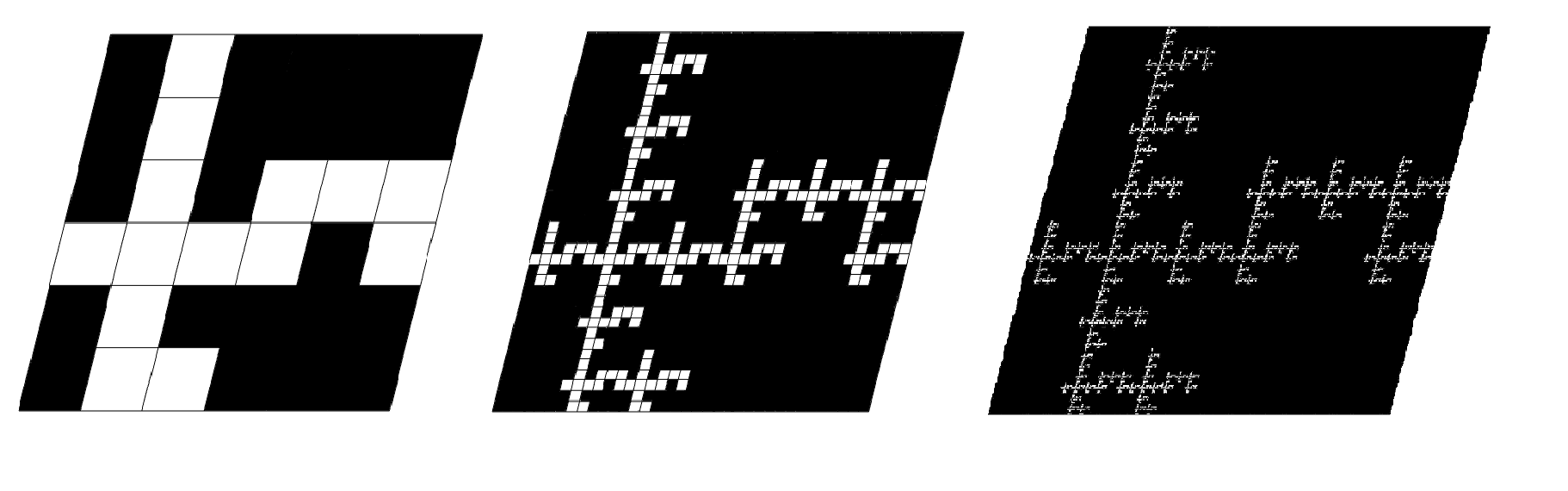}
\caption{First three stages of a $6\times 6$-quadrilateral labyrinth fractal}
\end{figure}
\begin{prop}
Let $\mathcal{W}_1$ be a quadrilateral labyrinth set in the quadrilateral $Q$. Then for any integer $n\ge 1$, $\mathcal W_n=\{P_{W_1^1,W_1^2,...,W_1^n}(Q) : W_1^i\in\mathcal W_1\forall i=1,2,...,n\}$, where, $P_{W_1^1,W_1^2,...,W_1^n}=P_{W_1^1}\circ P_{W_1^2}\circ ...\circ P_{W_1^n}$.
\end{prop}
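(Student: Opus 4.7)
The plan is to argue by induction on $n$, with Proposition~$2.2$ applied iteratively serving as the engine. For the base case $n=1$, I would simply note that the homeomorphism $P_{W_1^1}\colon Q\to W_1^1$ is surjective, so $P_{W_1^1}(Q)=W_1^1$ for every $W_1^1\in\mathcal W_1$, which gives the statement directly from the definition of $\mathcal W_1$.

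For the inductive step, assume every element of $\mathcal W_{n-1}$ can be written as $P_{W_1^1}\circ P_{W_1^2}\circ\cdots\circ P_{W_1^{n-1}}(Q)$ for some choice of $W_1^i\in\mathcal W_1$, and conversely each such composition lies in $\mathcal W_{n-1}$. By the defining recursion, a typical element of $\mathcal W_n$ has the form $P_{W_{n-1}}(W_1)$ for some $W_{n-1}\in\mathcal W_{n-1}$ and $W_1\in\mathcal W_1$. The crucial step is to identify the map $P_{W_{n-1}}$ with the composition $P_{W_1^1}\circ\cdots\circ P_{W_1^{n-1}}$, where the inductive hypothesis gives $W_{n-1}=P_{W_1^1}\circ\cdots\circ P_{W_1^{n-1}}(Q)$. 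I would establish this identity by repeated application of Proposition~$2.2$: setting $S_1:=P_{W_1^1}(W_1^2)$ yields $P_{W_1^1}\circ P_{W_1^2}=P_{S_1}$; next $S_2:=P_{S_1}(W_1^3)$ yields $P_{S_1}\circ P_{W_1^3}=P_{S_2}$; continuing, after $n-2$ steps one reaches $P_{W_1^1}\circ\cdots\circ P_{W_1^{n-1}}=P_{S_{n-2}}$, where by construction $S_{n-2}=P_{W_1^1}\circ\cdots\circ P_{W_1^{n-2}}(W_1^{n-1})=W_{n-1}$.

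With this identity in hand the conclusion is immediate. Writing $W_1=W_1^n$ and using the base-case observation $P_{W_1^n}(Q)=W_1^n$, we obtain
\[
P_{W_{n-1}}(W_1)=P_{W_1^1}\circ\cdots\circ P_{W_1^{n-1}}\bigl(P_{W_1^n}(Q)\bigr)=P_{W_1^1,\ldots,W_1^n}(Q),
\]
as required. The reverse inclusion is symmetric: any composition of $n$ projections factors as $P_{W_{n-1}}(W_1)$ for suitable $W_{n-1}\in\mathcal W_{n-1}$ by the same identity.

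The main point that requires care is verifying the hypothesis of Proposition~$2.2$ at every iterative step, since Remark~$2.4$ warns that the inner quadrilateral must lie in some $\mathcal S_\ell$. Here this is automatic, because each $W_1^i\in\mathcal W_1\subseteq\mathcal S_m$ and the intermediate quadrilaterals $S_k$ lie in $\mathcal W_{k+1}\subseteq\mathcal S_{m^{k+1}}$ by the same induction. I expect this bookkeeping to be the only obstacle, rather than any genuinely new idea, so the proof should amount to a clean inductive unwinding of the definitions with Proposition~$2.2$ doing the substantive work.
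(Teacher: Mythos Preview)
Your proposal is correct and follows essentially the same route as the paper: induction on $n$, with Proposition~2.2 applied recursively to identify $P_{W_{n-1}}$ with the composition $P_{W_1^1}\circ\cdots\circ P_{W_1^{n-1}}$. The only difference is cosmetic: for the reverse inclusion you observe directly that any $n$-fold composition factors as $P_{W_{n-1}}(W_1^n)$ via the same identity, whereas the paper instead shows $\mathcal W_n\subseteq\mathcal V_n$ and then invokes the cardinality count $|\mathcal W_n|=|\mathcal V_n|=k^n$ (where $k=|\mathcal W_1|$) to conclude equality.
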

\begin{proof}
The proof is by induction. For $n=1$, it is clear that \\$\mathcal W_1=\{P_{W_1}(Q) : W_1\in\nolinebreak\mathcal W_1\}$. So the result is true for $n=1$. Suppose the result holds upto $n-1$. $i.e.$, $\mathcal W_{n-1}=\{P_{W_1^1,W_1^2,...,W_1^{n-1}}(Q) : W_1^i\in\mathcal W_1, \forall i=1,2,...,n-1\}$.
Let $\mathcal V_n=\{P_{W_1^1,W_1^2,...,W_1^n}(Q) : W_1^i\in\mathcal W_1\forall i=1,2,...,n\}$. It is enough to prove that $\mathcal W_n=\mathcal V_n$, where $\mathcal W_n$ is given as $\mathcal{W}_n=\{P_{W_{n-1}}(W_1) : W_1\in\mathcal{W}_1,W_{n-1}\in\mathcal{W}_{n-1}\}$.
Choose  an element $P_{W_{n-1}}(W_1)\in \mathcal W_n$. Then  $P_{W_{n-1}}(W_1)= P_{W_{n-1}}\circ P_{W_1}(Q)\in \mathcal W_n$.\\
By induction hypothesis $W_{n-1}=P_{W_1^1,W_1^2,...,W_1^{n-1}}(Q)$ for $W_1^i\in\mathcal W_1, \forall i=1,2,...,n-~1$.\\
Hence,
\begin{equation*}
\begin{split}
P_{W_{n-1}}(W_1)&= P_{W_{n-1}}\circ P_{W_1}(Q)\\
&=P_{P_{W_1^1,W_1^2,...,W_1^{n-1}}(Q)}\circ P_{W_1}(Q)\\
&=P_{W_1^1}\circ P_{W_1^2}\circ ...\circ P_{W_1^{n-1}}\circ P_{W_1}(Q)\text{(Using Proposition $2.2$ recursively)}
\end{split}
\end{equation*}
Thus $\mathcal W_n\subseteq V_n$\\
Note that if $|\mathcal W_1|=k$, then clearly $|\mathcal W_n|=|\mathcal V_n|=k^n$. Hence $\mathcal W_n=\mathcal V_n$
\end{proof}

\begin{prop}
For all $n\ge 1$, $L_{\infty}=\bigcup\limits_{W_n\in\mathcal W_n}P_{W_n}(L_{\infty})$
\end{prop}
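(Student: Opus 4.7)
My plan is to prove the two inclusions separately, using Proposition 2.2 (composition of the $P$-maps) and Proposition 2.4 (the characterization of every element of $\mathcal{W}_k$ as $P_{W_1^1,\ldots,W_1^k}(Q)$ for some sequence in $\mathcal{W}_1$).

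For the inclusion $\bigcup_{W_n\in\mathcal{W}_n}P_{W_n}(L_\infty)\subseteq L_\infty$, I would first show that $P_{W_n}(L_k)\subseteq L_{n+k}$ for every $W_n\in\mathcal{W}_n$ and every $k\ge 1$. Given $W_n\in\mathcal{W}_n$, Proposition 2.4 lets me write $W_n=P_{V_1^1,\ldots,V_1^n}(Q)$, and any $W_k'\in\mathcal{W}_k$ as $P_{W_1^1,\ldots,W_1^k}(Q)$. Applying Proposition 2.2 recursively, $P_{W_n}(W_k')=P_{V_1^1}\circ\cdots\circ P_{V_1^n}\circ P_{W_1^1}\circ\cdots\circ P_{W_1^k}(Q)$, which is a $P_{(\cdot)}(Q)$ for a length-$(n+k)$ sequence in $\mathcal{W}_1$ and hence lies in $\mathcal{W}_{n+k}$ by Proposition 2.4. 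Taking unions gives $P_{W_n}(L_k)\subseteq L_{n+k}$, and intersecting over $k$ yields $P_{W_n}(L_\infty)\subseteq L_\infty$.

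For the reverse inclusion, pick $x\in L_\infty$. For each $k\ge n$, there is a $W_k\in\mathcal{W}_k$ with $x\in W_k$; by Proposition 2.4, $W_k=P_{W_1^1,\ldots,W_1^k}(Q)$, and splitting the sequence after the $n$-th term gives $W_k=P_{U_k}(V_k)$ where $U_k=P_{W_1^1,\ldots,W_1^n}(Q)\in\mathcal{W}_n$ and $V_k=P_{W_1^{n+1},\ldots,W_1^k}(Q)\in\mathcal{W}_{k-n}$ (using Proposition 2.2 recursively to identify the composition with a single $P_{U_k}\circ P_{V_k}$). Since $\mathcal{W}_n$ is finite, by the pigeonhole principle some fixed $U^*\in\mathcal{W}_n$ equals $U_k$ for infinitely many $k$. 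For each such $k$, $P_{U^*}^{-1}(x)\in V_k\subseteq L_{k-n}$; since $\{L_j\}$ is decreasing and contains $P_{U^*}^{-1}(x)$ for arbitrarily large $j=k-n$, it contains $P_{U^*}^{-1}(x)$ for every $j\ge 1$. Hence $P_{U^*}^{-1}(x)\in L_\infty$ and $x=P_{U^*}(P_{U^*}^{-1}(x))\in P_{U^*}(L_\infty)$.

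The only mildly delicate point is the decomposition step $W_k=P_{U_k}(V_k)$: one must be careful that $V_k\in\mathcal{S}_{m^{k-n}}$ so that Proposition 2.2 actually applies at each recursive step (recall Remark 2.2, which warns that Proposition 2.2 fails for general quadrilaterals). This is guaranteed because the intermediate compositions $P_{W_1^{j+1}}\circ\cdots\circ P_{W_1^k}(Q)$ are themselves elements of $\mathcal{W}_{k-j}\subseteq\mathcal{S}_{m^{k-j}}$, so Proposition 2.2 legitimately produces $P_{U_k}\circ P_{V_k}=P_{P_{U_k}(V_k)}=P_{W_k}$ at every stage. Once this is noted, both inclusions are straightforward and the pigeonhole-plus-monotonicity argument closes the proof.
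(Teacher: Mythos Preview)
Your argument is correct. The paper takes a slightly different and shorter route: rather than proving the two inclusions of the union separately, it first records (using injectivity/continuity of $P_{W_n}$ together with Proposition~3.3 --- what you call Proposition~2.4) the pointwise identity $P_{W_n}(L_\infty)=W_n\cap L_\infty$ for every $W_n\in\mathcal W_n$; the desired equality then drops out immediately from $\bigcup_{W_n\in\mathcal W_n}(W_n\cap L_\infty)=L_\infty\cap L_n=L_\infty$. That identity is a marginally stronger intermediate statement and sidesteps your pigeonhole step entirely, but your version has the merit of being fully explicit where the paper simply asserts that the identity ``can be easily shown.'' In substance both proofs rest on the same two ingredients (Proposition~2.2 and the characterisation of $\mathcal W_k$), so the difference is organisational rather than conceptual. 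One small bookkeeping note: the result you cite as Proposition~2.4 is numbered Proposition~3.3 in the paper.
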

\begin{proof}
Using continuity of $P_{W_n}$ and Proposition $3.3$, it can be easily shown that $P_{W_n}(L_{\infty})=W_n\cap L_{\infty}$, for any $n\ge 1$.
Hence,
\begin{equation*}
 \bigcup\limits_{W_n\in\mathcal W_n}P_{W_n}(L_{\infty})=\bigcup\limits_{W_n\in\mathcal W_n}(W_n\cap L_{\infty})=L_{\infty}\cap\bigcup\limits_{W_n\in\mathcal W_n}W_n=L_{\infty}\cap L_n=L_{\infty}
\end{equation*}
\end{proof}
\section{TOPOLOGICAL PROPERTIES}
This section deals with the topological properties of labyrinth set and labyrinth fractal. The section also studies different connectedness properties in $Q\setminus L_n$ and $Q\setminus L_{\infty}$.
\begin{prop}
If $\mathcal W\subset\mathcal S_m$ is a set of white quadrilaterals such that the associated graph $\mathcal{G}(\mathcal W)$ is a tree, then from every black quadrilateral in $\mathcal B=\mathcal S_m\setminus \mathcal W$, there is a path in $\mathcal{G}(\mathcal B)$ to a border quadrilateral.
\end{prop}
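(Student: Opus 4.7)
The plan is to argue by contradiction, exploiting the asymmetry between the adjacency rules of $\mathcal{G}(\mathcal{W})$ (common sides only) and $\mathcal{G}(\mathcal{B})$ (common sides or common vertices), in the spirit of a discrete Jordan curve argument. Suppose, for contradiction, that some $B_0 \in \mathcal{B}$ admits no path in $\mathcal{G}(\mathcal{B})$ to a border quadrilateral, and let $\mathcal{C}$ denote its connected component in $\mathcal{G}(\mathcal{B})$; by hypothesis $\mathcal{C}$ contains no border tile. Set $U = \bigcup_{B \in \mathcal{C}} B$. Since no tile of $\mathcal{C}$ is a border tile, $U$ is a closed subset of the interior of $Q$, so the open set $Q \setminus U$ meets $\partial Q$; let $V$ denote the connected component of $Q \setminus U$ containing $\partial Q$.

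Next I would enumerate the tiles of $\mathcal{S}_m$ that sit on the outer frontier of $U$ as seen from $V$, that is, those $W \in \mathcal{S}_m$ with $W \cap U \neq \emptyset$ and $W \cap V \neq \emptyset$, and arrange them into a cyclic sequence $W_1, W_2, \ldots, W_k$ by traversing the outer boundary of $U$ once while keeping $V$ on one side. All such $W_i$ are necessarily white: any black neighbour of $\mathcal{C}$, whether through a shared side or merely a shared vertex, is adjacent to $\mathcal{C}$ in $\mathcal{G}(\mathcal{B})$ and hence already belongs to $\mathcal{C}$. The heart of the argument is to verify that consecutive $W_i, W_{i+1}$ share a whole side, not just a vertex. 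If they met only at a vertex $v$, then the other two tiles of $\mathcal{S}_m$ incident to $v$ would lie on the opposite diagonal at $v$, one inside $U$ and one in $V$, and both would be forced to be black. Since they share the vertex $v$, they would be adjacent in $\mathcal{G}(\mathcal{B})$, dragging the tile in $V$ into $\mathcal{C}$, a contradiction. Consequently $W_1, W_2, \ldots, W_k, W_{k+1} = W_1$ is a closed walk in $\mathcal{G}(\mathcal{W})$ whose consecutive members are edges; since any closed walk of length $\geq 3$ in a graph contains a cycle, this contradicts the tree property of $\mathcal{G}(\mathcal{W})$.

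The main obstacle will be giving a clean rigorous meaning to the ``walk once around $U$'' step. The tiling $\mathcal{S}_m$ is not a uniform grid: it splices together the similar quadrilaterals from $A_3$ along the diagonal $Q_1Q_3$ with the two families of parallelograms from $A_1$ and $A_2$ in the two triangular halves of $Q$. Before the local four-tiles-at-a-vertex picture used above can be invoked cleanly, one has to check that every interior vertex of the tiling is incident to exactly four tiles arranged as a local cross, which can be verified case-by-case on the three families $A_1, A_2, A_3$ using the explicit coordinate formulas for $S_m$, and that the topological frontier between $U$ and $V$ admits a well-defined cyclic order on the adjacent tiles. Once this geometric bookkeeping is in place, the derived cycle in $\mathcal{G}(\mathcal{W})$ clashes with its assumed tree structure and the proof is finished.
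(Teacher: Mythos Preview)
Your argument is essentially correct but follows a genuinely different strategy from the paper. The paper proceeds by \emph{leaf removal}: assuming some black $B$ cannot reach the border, it picks a leaf $W$ of the tree $\mathcal{G}(\mathcal{W})$, moves it to the black set, and shows (by a local case analysis on the three non-white side-neighbours of $W$) that $B$ still cannot reach the border in the enlarged black graph; iterating until $\mathcal{W}$ is empty yields the contradiction. Your approach instead builds a \emph{discrete Jordan curve}: the white tiles forming the outer frontier of the trapped black component $\mathcal{C}$ are forced, by the side/vertex asymmetry of the two adjacency notions, to be pairwise side-adjacent in cyclic order, producing a cycle in $\mathcal{G}(\mathcal{W})$.

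What each buys: the paper's inductive argument is lighter on geometry, needing only the immediate neighbour structure of a single leaf tile, and never requires one to speak of ``walking around'' a region; your argument is more conceptual and explains in one stroke why a tree cannot enclose anything, but, as you yourself flag, it requires verifying that $\mathcal{S}_m$ really behaves like an $m\times m$ grid at every interior vertex (four tiles in a cross), including along the diagonal row $A_3$ where the three tile families meet. That verification is routine from the explicit vertex formulas for $S_m$, and once it is in place your closed walk has length $\ge 3$ with no global backtracking, so it does contain a cycle. Either route proves the proposition; yours is closer to the classical combinatorial duality between $4$- and $8$-connectivity on a square lattice, while the paper's is a self-contained elementary induction.
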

\begin{proof}
Suppose for the contrary that there exist a black quadrilateral $B$ in $\mathcal G(\mathcal B)$ such that there does not exist a path from $B$ to any border quadrilateral. Since $\mathcal G(\mathcal W)$ is a tree, choose $W\in\mathcal W$ such that $W$ has only one neighbour in $\mathcal G(\mathcal W)$. WLOG, suppose this unique neighbour of $W$ lies at the right side of $W$. Let $\mathcal W'=\mathcal W\setminus\{W\}$ and $\mathcal B'=\mathcal B\cup\{W\}$ and consider the graphs $\mathcal G(\mathcal W')$ and $\mathcal G(\mathcal B')$.\\
Claim 1: There does not exist a path from $B$ to a border quadrilateral in $\mathcal G(\mathcal B')$.\\
Suppose the claim is not true. $i.e.,$ there exist a path from $B$ to one of the border quadrilateral in $\mathcal G(\mathcal B')$. Thus the connectivity component of $B$ in $\mathcal G(\mathcal B')$ should contain $W$, otherwise it will contradict the assumption.\\
Case 1:  If $W$ is a border quadrilateral, then it can be in top row, left row or bottom row. But in each of these cases, all quadrilaterals, except the quadrilateral on right side of $W$, having intersection with $W$ along a side of $W$, is a black quadrilateral and all these quadrilaterals are in the connectivity component of $B$ in $\mathcal G(\mathcal B')$. If $W$ is in top (or bottom) row, then the left and bottom (or top) neighbours of $W$, which are black both in $\mathcal G(\mathcal W)$ and $\mathcal G(\mathcal W')$ ensures path from $B$ in $\mathcal G(\mathcal B)$ to a border quadrilateral. Now if $W$ is a quadrilateral in the left column, then the top and bottom black neighbours of $W$ ensures a path from $B$ to a border square in $\mathcal G(\mathcal B)$. So in any case, the connectivity component of $B$ in $\mathcal G(\mathcal B)$ contains a border square and hence a contradiction to the assumption.\\
Case 2: If $W$ is not a border quadrilateral, and there exist a path from $B$ to border quadrilateral in $\mathcal G(\mathcal B')$ ensures a path from $W$ to a border quadrilateral. But the quadrilaterals on top, bottom and left side of $W$ are black and are in the connectivity component of $B$ in both $\mathcal G(\mathcal B)$ and $\mathcal G(\mathcal B')$. Hence in $\mathcal G(\mathcal B)$ also, a path exist from $B$ to  a border quadrilateral, which is again a contradiction.\\
Hence there does not exist a path from $B$ to a border quadrilateral in $\mathcal G(\mathcal B')$. \\
Again choose $W'\in \mathcal W'$ such that $W'$ has only one neighbour in $\mathcal G(\mathcal W')$ and repeat the process until no such white quadrilaterals having only one white neighbour exist. So all the white quadrilateral, if it exist, have degree greater than one and it will result in a cycle in $\mathcal G(\mathcal W)$, which is not possible. Hence all quadrilaterals are black. But it agains leads to a contradiction to the Claim 1.
\end{proof}
\begin{defn}
 Let $X$ is a topological space and $x_0,x_1\in X$. Then an arc in $X$ from $x_0$ to $x_1$ is a continuous function $\gamma:[0,1]\to X$ such that $\gamma(0)=x_0$ and $\gamma(1)=x_1$.
\end{defn}
\begin{defn}
A dendrite is a locally connected continuum that contain no simple closed curve and a continuum is a nonempty compact connected Hausdorff space.
\end{defn}
\begin{prop}
If $x$ is a point in $Q\setminus L_n$, then there is an arc $a\subseteq Q\setminus L_{n+1}$ between $x$ and a point in the boundary of $Q$.
\end{prop}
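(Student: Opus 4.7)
The plan is to apply Proposition~4.1 at level $n$ and thread the resulting graph path into an explicit arc lying in $Q\setminus L_{n+1}$; the drop from $L_n$ to $L_{n+1}$ provides exactly the extra room needed to handle the points where consecutive black quadrilaterals touch only at a vertex.

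If $x\in\partial Q$ the constant arc works, so suppose $x$ lies in the interior of $Q$. Then $x\notin L_n$ gives $x\in B_0$ for some $B_0\in\mathcal B_n$, and since $\mathrm{int}(B_0)\cap L_n=\emptyset$ and $L_{n+1}\subseteq L_n$, a short initial segment inside $B_0$ carries $x$ to an interior point of $B_0$. By Proposition~3.2, $\mathcal W_n$ is an $m^n\times m^n$-quadrilateral labyrinth set, so $\mathcal G(\mathcal W_n)$ is a tree, and Proposition~4.1 then produces a sequence $B_0,B_1,\ldots,B_k\in\mathcal B_n$ with consecutive members adjacent in $\mathcal G(\mathcal B_n)$ and $B_k$ a border quadrilateral.

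Next I would build $a$ as a concatenation of line segments, one inside each $B_i$ joining the entry point from $B_{i-1}$ to the exit point into $B_{i+1}$; these interior segments are harmless because $\mathrm{int}(B_i)\cap L_{n+1}=\emptyset$. The content is in the transitions. If $B_i$ and $B_{i+1}$ share a common side $e$, I cross through an interior point of $e$: the relative interior of $e$ lies in no white quadrilateral of $\mathcal W_n$, so it is disjoint from $L_n\supseteq L_{n+1}$. If $B_i$ and $B_{i+1}$ meet only at a vertex $v$, let $X,Y\in\mathcal S_{m^n}$ be the two other quadrilaterals at $v$. If one of $X,Y$ is black, I insert it into the chain and reduce to the previous case. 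If both are white, I exploit the subdivisions of $X$ and $Y$ at stage $n+1$: by the exit and corner properties of $\mathcal W_1$ applied inside $X$ and $Y$, a short sequence of black sub-quadrilaterals of stage $n+1$ around $v$ inside $X\cup Y$ can be found which connects $B_i$ to $B_{i+1}$, and I route the arc through their interiors, avoiding all elements of $\mathcal W_{n+1}$. Finally, since $B_k$ is a border quadrilateral, it has a side $e^{\ast}\subseteq\partial Q$; I terminate the arc at an interior point $q$ of $e^{\ast}$, which lies in $\partial Q\cap(Q\setminus L_{n+1})$ by the same reasoning as in the side-sharing case.

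The hard step is plainly the vertex-only case: one must exhibit the specific black sub-quadrilaterals of stage $n+1$ inside $X\cup Y$ that form a detour around $v$, and verify using the tree, exit, and corner properties of $\mathcal W_1$ that they actually connect $B_i$ to $B_{i+1}$ within $Q\setminus L_{n+1}$. Every other step is a routine assembly of line segments inside interiors of black quadrilaterals or across interiors of their shared sides, the required disjointness from $L_{n+1}$ following from $L_{n+1}\subseteq L_n$ together with the fact that such interiors do not meet $L_n$.
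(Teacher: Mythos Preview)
Your approach is essentially the paper's: apply Proposition~4.1 at level $n$ to obtain a chain $B_0,\ldots,B_k$ of black quadrilaterals ending at the border, run an arc through their interiors, cross shared sides at interior points, and when two consecutive $B_i$ touch only at a vertex $v$ use the corner property at level $n+1$ to find a black sub-quadrilateral inside the adjacent white cells and detour around $v$. The paper states this slightly more tersely (it names a single $B'\in\mathcal B_{n+1}$ guaranteed by the corner property rather than a short sequence, and it does not split off your case where one of $X,Y$ is already black), but the structure and the key idea---that the passage from $L_n$ to $L_{n+1}$ supplies exactly the black corner cell needed for the vertex detour---are the same.
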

\begin{proof}
Since $x\in Q\setminus L_n$, there exist a black quadrilateral $B\in \mathcal B_n$ such that $x\in B$. By Proposition $4.1$, there exist a path $B_0B_1...B_k$ in $\mathcal B_n$ from $B=B_0$ to a border quadrilateral $B_n$. Construct an arc $a'$ from $x$ to centre of $B_0$ and then from centre of $B_i$ to $B_{i+1}$ for $i=0,1,...,n-1$ and then from $B_n$ to the boundary of $Q$. If $B_i$ and $B_{i+1}$ have a common side for all $i=1,2,...,n-1$, then the arc $a'$ can be taken as the required arc. If $B_i$ and $B_{i+1}$ are neighbours along a common vertex $c$ only, then $c\in L_n$ and so $c\not\in Q\setminus L_n$. In this case $c$ may or may not belongs to $Q\setminus L_{n+1}$. If $c\not\in Q\setminus L_{n+1}$, then modify the arc $a'$ to $a$ as follows: Choose the black quadrilateral $B'\in \mathcal B_{n+1}$, $B'\subseteq W'\cup W''$, where, $W',W''\in \mathcal W_n$ and have one vertex as $c$. Such a black quadrilateral is ensured by the corner property. From $B_i$ to $B_{i+1}$ use $B'$ for the arc $a$ by removing $c$ from $a'$. Hence $a$ will be a required arc in $Q\setminus L_{n+1}$.
\end{proof}
\begin{coro}
If $x$ is a point in $Q\setminus L_{\infty}$, then there is an arc $a\subseteq Q\setminus L_{\infty}$ between $x$ and a point in the boundary of $Q$.
\end{coro}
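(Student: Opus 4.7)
The plan is to reduce the corollary directly to Proposition~4.2 using the nesting of the sets $L_n$.

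First I would observe that $L_\infty = \bigcap_{n\ge 1} L_n$, so
\[
Q\setminus L_\infty \;=\; Q\setminus \bigcap_{n\ge 1} L_n \;=\; \bigcup_{n\ge 1}(Q\setminus L_n).
\]
Hence if $x\in Q\setminus L_\infty$, then there exists some integer $n\ge 1$ with $x\in Q\setminus L_n$. I would single out such an $n$ at the start of the argument.

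Next I would apply Proposition~4.2 to $x$ at stage $n$: it yields an arc $a\subseteq Q\setminus L_{n+1}$ from $x$ to a point on the boundary of $Q$. The key remaining observation is that $\{L_n\}$ is monotonically decreasing (this is already noted in the construction, since $\mathcal W_{n+1}$ is obtained by subdividing elements of $\mathcal W_n$). In particular $L_\infty\subseteq L_{n+1}$, so
\[
Q\setminus L_{n+1}\;\subseteq\; Q\setminus L_\infty.
\]
Consequently the arc $a$ produced by Proposition~4.2 already lies in $Q\setminus L_\infty$, and its boundary endpoint is not in $L_{n+1}$, hence not in $L_\infty$ either. This completes the argument.

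Because the nesting $L_\infty\subseteq L_{n+1}$ is immediate from the construction, I do not expect any real obstacle: the content is entirely absorbed into Proposition~4.2. The only subtlety worth flagging is making explicit that the endpoint on $\partial Q$ automatically avoids $L_\infty$ for the same reason, so no additional perturbation of the arc is required.
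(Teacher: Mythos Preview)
Your argument is correct and is exactly the intended one: the paper states this as an immediate corollary of Proposition~4.2 without writing out a proof, and your reduction via $Q\setminus L_\infty=\bigcup_{n\ge 1}(Q\setminus L_n)$ together with $Q\setminus L_{n+1}\subseteq Q\setminus L_\infty$ makes that implicit step explicit.
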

\begin{theo}
$L_{\infty}$ is a dendrite.
\end{theo}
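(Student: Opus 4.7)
The strategy is to verify the three defining features of a dendrite in turn: that $L_\infty$ is a continuum, that it is locally connected, and that it contains no simple closed curve. The first two properties follow quickly from what has already been established. By the remark following Proposition 3.2, $\{L_n\}$ is a decreasing sequence of nonempty compact connected subsets of the Hausdorff space $Q$, so its intersection $L_\infty$ is nonempty, compact, and connected (the intersection of a nested family of compact connected sets in a Hausdorff space is connected); thus $L_\infty$ is a continuum.

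For local connectedness, given $x\in L_\infty$ and $\varepsilon>0$, I would choose $n$ large enough that every $W\in\mathcal{W}_n$ has diameter less than $\varepsilon/2$, and set $U_x=\bigcup\{\,W\cap L_\infty:W\in\mathcal{W}_n,\ x\in W\,\}$. This is a neighborhood of $x$ in $L_\infty$ of diameter less than $\varepsilon$; by Proposition 3.4 each piece $W\cap L_\infty=P_W(L_\infty)$ is a homeomorphic image of the connected set $L_\infty$ and hence connected, and since every piece contains $x$, their union $U_x$ is connected.

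The serious step is ruling out simple closed curves. Suppose for contradiction that $\gamma\subseteq L_\infty$ is one; since $\gamma$ has positive diameter, choose $n$ so large that no single white quadrilateral in $\mathcal{W}_n$ contains $\gamma$. Among the finitely many level-$n$ white quadrilaterals meeting $\gamma$, take the smallest connected subtree of $\mathcal{G}(\mathcal{W}_n)$ containing them; it has at least two vertices, hence at least one edge, corresponding to a common side $s$ shared by two white quadrilaterals $W$ and $W'$. Removing this edge partitions $\mathcal{G}(\mathcal{W}_n)$ into subtrees $T_1\ni W$ and $T_2\ni W'$, giving $L_n=L_n^1\cup L_n^2$ with $L_n^1\cap L_n^2=s$, where $L_n^i=\bigcup_{U\in T_i}U$; by minimality of the subtree, $\gamma$ meets both $L_n^1\setminus s$ and $L_n^2\setminus s$. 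Granted the key subclaim that $L_\infty\cap s$ is a single point $z$, if $\gamma\cap s=\emptyset$ then $\gamma$ would be the disjoint union of two nonempty closed sets $\gamma\cap L_n^i$, contradicting its connectedness; and if $\gamma\cap s=\{z\}$ then $\gamma\setminus\{z\}$ would be a disjoint union of two nonempty relatively closed subsets, contradicting the fact that $S^1$ minus a point is connected.

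The hard part, and the main obstacle, is exactly that subclaim $L_\infty\cap s=\{z\}$. I would prove it by iterating the exit property together with the tree-property alignment across common sides (which is the content of the inductive step in Proposition 3.2): at level $n+1$, the exit property applied inside each of $W$ and $W'$ picks out a unique level-$(n+1)$ white sub-quadrilateral adjacent to $s$, and the tree property of $\mathcal{W}_{n+1}$ forces these two sub-quadrilaterals to share a common sub-side $s_1\subseteq s$ with $|s_1|=|s|/m$. Iterating produces a nested sequence $s\supset s_1\supset s_2\supset\cdots$ with $|s_k|=|s|/m^k\to 0$, and any point of $L_\infty\cap s$ must lie in every $s_k$, forcing $L_\infty\cap s=\bigcap_k s_k$ to be a single point. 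This persistence of exit-alignment across common sides at every level is the delicate ingredient; once it is secured, the dendrite conclusion follows cleanly.
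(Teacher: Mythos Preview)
Your argument is correct, and for the continuum and local-connectedness parts it is essentially a direct unpacking of what the paper does via the Hahn--Mazurkiewicz--Sierpi\'nski criterion (finite union of small continua). Where you genuinely diverge from the paper is in excluding simple closed curves. The paper's route is external: invoke the Jordan curve theorem to find an interior black quadrilateral, then use Corollary~4.1 to produce an arc in $Q\setminus L_\infty$ from that quadrilateral to $\partial Q$, which must cross the curve --- a contradiction. Your route is internal: use the tree structure of $\mathcal{G}(\mathcal{W}_n)$ to find a separating common side $s$, establish the key subclaim $|L_\infty\cap s|=1$ by iterating the exit property (this is exactly the alignment built into Definition~3.1(2) and exploited in the proof of Proposition~3.2), and then get a disconnection of $\gamma$ or of $\gamma\setminus\{z\}$. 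Your approach is more self-contained, since it bypasses Propositions~4.1--4.2 and Corollary~4.1 and hence the Jordan curve theorem entirely; the paper's approach is shorter once those complement results are in hand, and it exposes the useful structural fact that points in the complement can be joined to the boundary. One small remark: in your local-connectedness step you should say explicitly why $U_x$ is a \emph{neighborhood} of $x$ --- this holds because the finitely many level-$n$ white quadrilaterals not containing $x$ form a closed set at positive distance from $x$.
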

\begin{proof}
$L_{\infty}=\bigcap\limits_{n=1}^{\infty}L_n$ and $\{L_n\}$ is a decresing sequence of nonempty connected compact sets. Clearly $L_{\infty}$ is nonempty, since it is the intersection of a decreasing sequence of nonempty compact sets. Since each $L_n$ is compact, they are closed and bounded. Hence $L_{\infty}$ is also closed and bounded. So $L_{\infty}$ is compact. Also $L_{\infty}$ is a subspace of the Hausdorff space $\mathbb{R}^2$, and hence $L_{\infty}$ is Hausdorff. Suppose $L_{\infty}$ is not connected. Then there exist two disjoint open sets $U$ and $V$ such that $L_{\infty}=U\cup V$.\\ Consider the sets $F_i=L_i\setminus L_{\infty}=L_i\setminus U\cup V$ for $i=1,2,3,...$. Then $\{F_i\}$ is a decreasing sequence of compact sets and $\bigcap\limits_{i=1}^{\infty}F_i=\phi$. Hence there exist an $n\in \mathbb{N}$ such that $F_n=\phi$ and so $L_n\subseteq L_{\infty}$. Also $L_{\infty}\subseteq L_n$. Thus $L_n=L_{\infty}=U\cup V$ implies $L_n$ is disconnected, which is not true.   
Thus $L_{\infty}$ should be connected. Hence $L_{\infty}$ is a continuum. For any $\epsilon>0$, we can find an $n$ such that diameter of $W_n$ is less than $\epsilon$ for all $W_n\in\mathcal W_n$ and by Proposition $3.4$,  $L_{\infty}=\bigcup\limits_{W_n\in\mathcal W_n}P_{W_n}(L_{\infty})$. Here $P_{W_n}(L_{\infty})$ is a continuum for any $W_n\in\mathcal W_n$, since $L_{\infty}$ is a continuum.  Hence $L_{\infty}$ is the finite union of continua of diameter less than $\epsilon$.  Then by Hahn-Mazurkiewicz-Sierpinski theorem (Topology; K.Kuratowski\cite{7}), $L_{\infty}$ is locally connected.\\
Now suppose $L_{\infty}$ contains a simple closed curve $c$. Then by Jordan curve theorem, the interior of $c$ is nonempty and open. For sufficiently large $n$ there exist a black quadrilateral $B$ of order $n$ in interior of $c$. So there exist an arc from a point of $B$ to the boundary of the quadrilateral(By Corollary $4.1$). This arc will cross $c$, which is not possible, since $c\subseteq L_{\infty}$. So $L_{\infty}$ does not contain a simple closed curve.
\end{proof}
\begin{theo}
The interior of $L_{\infty}$ is empty.
\end{theo}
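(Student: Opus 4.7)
The plan is to argue by contradiction. I will suppose $L_\infty$ has nonempty interior, extract an open ball $B(x,r)\subseteq L_\infty$, use the shrinking-diameter property of $\mathcal W_n$ to find an entire level-$n$ white quadrilateral $W_n$ sitting inside that ball, and then exploit the self-reproducing identity $P_{W_n}(L_\infty)=W_n\cap L_\infty$ established inside the proof of Proposition 3.4 to conclude $L_\infty=Q$, contradicting $L_\infty\subseteq L_1\subsetneq Q$.

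For the first step I will use that the diameters of the quadrilaterals in $\mathcal S_{m^n}$ tend to zero as $n\to\infty$ (the same uniformity that is invoked in the local-connectedness part of Theorem 4.1). Choosing $n$ with $\operatorname{diam}(W)<r$ for every $W\in\mathcal W_n$, and using that $x\in L_\infty\subseteq L_n$, I pick $W_n\in\mathcal W_n$ with $x\in W_n$; the diameter bound then forces $W_n\subseteq B(x,r)\subseteq L_\infty$.

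For the second step I combine $W_n\subseteq L_\infty$ with $P_{W_n}(L_\infty)=W_n\cap L_\infty$ to get $P_{W_n}(L_\infty)=W_n=P_{W_n}(Q)$, and the injectivity of $P_{W_n}$ from Proposition 2.1 yields $L_\infty=Q$. To close the argument I need $L_\infty\neq Q$, which follows from the corner property in Definition 3.1: it forbids both elements of each diagonally opposite corner pair from sitting in $\mathcal W_1$, so at least two of the four corner quadrilaterals of $\mathcal S_m$ belong to $\mathcal B_1$. Hence $\mathcal W_1\subsetneq\mathcal S_m$, which gives $L_1\subsetneq Q$ and therefore $L_\infty\subseteq L_1\subsetneq Q$, the desired contradiction.

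The step I expect to be the most delicate is the passage from an arbitrary open ball inside $L_\infty$ to a full white quadrilateral $W_n\subseteq L_\infty$, since it rests on the uniform shrinkage of diameters across all of $\mathcal W_n$; once such a $W_n$ is in hand, the homeomorphism structure of $P_{W_n}$ together with the identity from Proposition 3.4 does the real work, and the corner property furnishes the final, cheap contradiction.
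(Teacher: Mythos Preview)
Your argument is correct, but it follows a genuinely different route from the paper's. The paper's proof is a one-line application of Theorem~4.1: an open ball $B(x,r)\subseteq L_\infty$ would contain a circle of radius $\epsilon<r$, which is a simple closed curve inside $L_\infty$, contradicting the dendrite property. Your approach instead sidesteps Theorem~4.1 entirely: you use the uniform shrinkage of diameters to trap a full $W_n\in\mathcal W_n$ inside the ball, then invoke the identity $P_{W_n}(L_\infty)=W_n\cap L_\infty$ from Proposition~3.4 together with the injectivity of $P_{W_n}$ (Proposition~2.1) to force $L_\infty=Q$, and finally derive a contradiction from the corner property. The paper's proof is shorter but leans on the heavy machinery behind Theorem~4.1 (Hahn--Mazurkiewicz--Sierpi\'nski, Jordan curve theorem, Corollary~4.1); your proof is more elementary and self-contained, relying only on Propositions~2.1 and~3.4 and the combinatorics of Definition~3.1, so it could in fact be placed \emph{before} Theorem~4.1 in the logical order.
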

\begin{proof}
Suppose not. Then there exist a point $x\in Int(L_{\infty})$ and an $r>0$ such that ball with cente $x$ and radius $r$, say $B(x,r)$, is contained in $L_{\infty}$. Choose an $\epsilon$ such that $o<\epsilon<r$ and $\overline {B(x,\epsilon)}\subseteq B(x,r)$. Then clearly the circle with centre $x$ and radius $\epsilon$ is a simple closed curve contained in $L_{\infty}$. It is a contradiction.
\end{proof}
\begin{coro}
$L_{\infty}$  is a first category subset in $\mathbb{R}^2$.
\end{coro}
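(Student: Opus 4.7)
The plan is to leverage the preceding Theorem 4.2 directly: once we know $L_\infty$ has empty interior, the corollary is essentially immediate from the definition of first category. Recall that a subset of a topological space is of first category (meager) if it can be written as a countable union of nowhere dense sets, and that a set $E$ is nowhere dense if and only if $\mathrm{Int}(\overline{E}) = \emptyset$.

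First I would observe that $L_\infty = \bigcap_{n=1}^\infty L_n$ is closed in $\mathbb{R}^2$: each $L_n$ is compact (hence closed) as already noted in the construction section, and arbitrary intersections of closed sets are closed. In particular $\overline{L_\infty} = L_\infty$. Next I would invoke Theorem 4.2, which gives $\mathrm{Int}(L_\infty) = \emptyset$. Combining these two facts yields $\mathrm{Int}(\overline{L_\infty}) = \mathrm{Int}(L_\infty) = \emptyset$, so $L_\infty$ is nowhere dense in $\mathbb{R}^2$.

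Finally, writing $L_\infty = L_\infty \cup \emptyset \cup \emptyset \cup \cdots$ exhibits $L_\infty$ as a countable union of nowhere dense sets, so by definition $L_\infty$ is of first category in $\mathbb{R}^2$.

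There is no real obstacle here; the corollary is a one-line consequence of Theorem 4.2 together with the closedness of $L_\infty$. The only thing to be careful about is to state explicitly that $L_\infty$ is closed (so that its closure equals itself, and ``empty interior'' immediately upgrades to ``nowhere dense''), since the definition of first category is phrased in terms of nowhere dense sets rather than directly in terms of empty interior.
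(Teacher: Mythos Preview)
Your proof is correct and follows the same opening as the paper: use closedness of $L_\infty$ together with Theorem~4.2 to conclude that $\mathrm{Int}(\overline{L_\infty})=\emptyset$, i.e.\ $L_\infty$ is nowhere dense. Where you differ is in the final step. You simply observe that a single nowhere dense set is trivially a countable union of nowhere dense sets, which is the cleanest way to finish. The paper instead introduces a countable cover of $\mathbb{R}^2$ by vertical strips $[r_1,r_2]\times\mathbb{R}$ with rational endpoints, sets $B_i=A_i\cap L_\infty$, and writes $L_\infty=\bigcup_i B_i$ with each $B_i$ nowhere dense as a subset of a nowhere dense set. This decomposition is entirely superfluous---once $L_\infty$ itself is nowhere dense, nothing further is needed---so your argument is in fact more direct than the paper's.
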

\begin{proof}
Since $L_{\infty}$ is closed, interior of closure of $L_{\infty}$ is same as the interior of $L_{\infty}$ and it is empty. Thus $L_{\infty}$ is nowhere dense subset of $\mathbb{R}^2$.\\
Consider closed intervals in $\mathbb{R}$ with rational endpoints and then consider a strip on $\mathbb{R}^2$ corresponding to each of these intervals. i.e, if $[r_1,r_2]$ is a interval with $r_1,r_2\in \mathbb{Q}$, consider the strip $[r_1,r_2]\times y-axis$. Collection of all such strip is, say $\{A_i\}$, is countable. Let $B_i=A_i\cap L_{\infty}$ , then $\{B_i\}$ is countable and clearly $L_{\infty}=\bigcup\limits_{i=1}^{\infty}B_i$ and each $B_i$ is nowhere dense, since they are subsets of nowhere dense set $L_{\infty}$.
Hence $L_{\infty}$ is of first category subset in $\mathbb{R}^2$. 
\end{proof}
\begin{rem}
Let $L_{\infty}$ as a subspace of $\mathbb{R}^2$ with induced topology. Then $L_{\infty}$ is compact and Hausdorff and hence a Baire's space. Since every Baire's space is of second category, $L_{\infty}$ is a second category subspace. (Topology; J.R. Munkres\cite{8})
\end{rem}

\begin{theo}
$Q\setminus L_{\infty}$ is not path connected.
\end{theo}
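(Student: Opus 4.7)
The plan is to produce a simple arc $\alpha\subset L_\infty$ whose endpoints lie on two opposite sides of $Q$, and then use the fact that a Jordan arc in a topological disk with endpoints on the boundary separates the disk into two open components --- neither of which can reach the other once $L_\infty$ is removed.

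To obtain the endpoints, I would note that for every $n\ge 1$, the top exit $T_n\in\mathcal{W}_n$ (guaranteed by Proposition $3.2$ via the exit property at level $n$) is of the form $S_{m^n}(k_1,0,0,k_4)$, and an inspection of the formulas defining $S_{m^n}$ on $A_2$ shows that two of its vertices lie on the top side of $Q$, so one entire side of $T_n$ is contained in that side. In particular $L_n\cap(\text{top side of }Q)$ is a nonempty compact subset of the top side. Since $\{L_n\}$ is decreasing, so is $\{L_n\cap(\text{top side of }Q)\}$, and by compactness the intersection $L_\infty\cap(\text{top side of }Q)$ is nonempty. Pick a point $t$ in this intersection, and analogously pick $b\in L_\infty\cap(\text{bottom side of }Q)$. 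Since the top and bottom sides of $Q$ share no vertex, $t\neq b$. By Theorem $4.1$, $L_\infty$ is a dendrite, so it is arcwise connected and any two of its points are joined by a unique simple arc; let $\alpha\subset L_\infty$ be the arc from $t$ to $b$.

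Next, $Q$ is a topological closed disk and $\alpha$ is a Jordan arc in $Q$ with both endpoints on $\partial Q$ but on two different sides. A standard consequence of the Jordan curve theorem (applied to $\alpha$ together with either of the two subarcs of $\partial Q$ it cuts out) is that $Q\setminus\alpha$ has exactly two connected components $U$ and $V$, open in $Q$; moreover each meets the interior of $Q$, hence each contains a nonempty open subset of $\mathbb{R}^2$. By Theorem $4.2$, $L_\infty$ has empty interior, so neither $U$ nor $V$ can be contained in $L_\infty$; choose $p\in U\setminus L_\infty$ and $q\in V\setminus L_\infty$. Any continuous path in $Q\setminus L_\infty$ from $p$ to $q$ would, since $\alpha\subseteq L_\infty$, also be a path in $Q\setminus\alpha$ between the distinct components $U$ and $V$, which is impossible. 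Therefore $Q\setminus L_\infty$ is not path connected.

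The main obstacle is guaranteeing that $\alpha$ is a genuinely simple (embedded) arc whose two endpoints actually sit on $\partial Q$ on opposite sides --- and this is exactly what the dendrite structure buys us for free, since in a dendrite any two points are joined by a unique Jordan arc. Once that is secured, the only remaining care is the clean invocation of the arc-in-disk separation theorem for the convex quadrilateral $Q$ regarded as a topological disk, after which Theorem $4.2$ finishes the argument with no further computation.
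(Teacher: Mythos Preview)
Your argument is correct and reaches the same conclusion by a genuinely different route than the paper. The paper stays at the combinatorial level: it takes the graph path in $\mathcal{W}_1$ from the left exit to the right exit, intersects the union of those quadrilaterals with $L_\infty$ to get a separating set $a$, and then exhibits two explicit points $x,y$ lying in black \emph{corner} quadrilaterals on the top and bottom (available by the corner property), asserting that any path from $x$ to $y$ in $Q$ must meet $a\subset L_\infty$. You instead lean on the already--proved Theorems~4.1 and~4.2: the dendrite structure hands you a simple arc $\alpha\subset L_\infty$ joining boundary points on opposite sides, Jordan--type separation in the topological disk $Q$ gives components of $Q\setminus\alpha$, and empty interior of $L_\infty$ supplies the two witness points. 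The paper's proof is more self--contained (it does not invoke the dendrite or empty--interior theorems) but leaves the crucial ``must intersect $a$'' step at an intuitive level; your version makes that separation step explicit via the crosscut theorem, at the cost of importing heavier results proved earlier.

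One small imprecision worth tightening: the crosscut statement ``$Q\setminus\alpha$ has exactly two components'' presumes that the \emph{interior} of $\alpha$ lies in $\mathrm{int}(Q)$, whereas your arc could in principle touch $\partial Q$ at points other than $t$ and $b$ (for instance at the left or right exit points of $L_\infty$). This does not damage the proof --- $\alpha$ cannot coincide with a full boundary arc of $Q$ since $L_\infty$ meets each side of $Q$ in a single point, and in every remaining case $Q\setminus\alpha$ is still disconnected with each component open in $Q$ and hence meeting $\mathrm{int}(Q)$ --- but you should phrase the separation claim as ``at least two components'' and note why $\alpha$ is not a boundary arc.
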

\begin{proof}
Choose an exit $W_L$ of $\mathcal W_1$ such that $W_L$ is not a corner quadrilateral. Such an exit always exist by Proposition $3.1$ and corner property. WLOG suppose that $W_L$ is the left exit and let $P$ be a path from $W_L$ to $W_R$, where $W_R$ is the right exit of $\mathcal W_1$, which is clearly not a corner quadrilateral. Let $a=L_{\infty}\cap P$. Then $a$ is a path from left exit to right exit in $L_{\infty}$. Since both bottom-left corner and bottom-right corner cannot be white in $\mathcal W_1$, choose the black quadrilateral, say $B$, from these two corners. In a similar way choose a black quadrilateral, say $B'$, from top-left corner and top-right corner of $\mathcal W_1$. Let $x\in B$ and $y\in B'$. Now, if there exist a path from $x$ to $y$, then it should intersect with $a$, which is not possible, since $a\subseteq L_{\infty}$. So there does not exist a path from $x$ to $y$ in $Q\setminus L_{\infty}$ and so $Q\setminus L_{\infty}$ is not path connected. 
\end{proof}
\pagebreak
\begin{coro}
$Q\setminus L_{\infty}$ is not connected.
\end{coro}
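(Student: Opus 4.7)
The plan is to derive this corollary directly from Theorem 4.3 via a short point-set topology argument that exploits the fact that $Q$ is a topological disk. The key observation is that for an open subset of a locally path connected space, connectedness and path connectedness coincide, so once I verify that $Q\setminus L_\infty$ sits as an open subset of a locally path connected $Q$, the failure of path connectedness furnished by Theorem 4.3 will immediately force the failure of connectedness.

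The first step is to note that $Q$, being a closed convex quadrilateral in $\mathbb{R}^2$, is homeomorphic to the closed disk and is therefore locally path connected. The second step uses that $L_\infty$ is compact (this was already established at the beginning of the proof of Theorem 4.1) and hence closed in $Q$, so $Q\setminus L_\infty$ is open in $Q$. Since any open subspace of a locally path connected space is itself locally path connected, $Q\setminus L_\infty$ is locally path connected.

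The final step invokes the standard topological fact that a connected, locally path connected space is path connected (each path component is then open, hence also closed as the complement of the union of the other path components). If $Q\setminus L_\infty$ were connected, this fact together with the previous step would yield path connectedness, directly contradicting Theorem 4.3. Hence $Q\setminus L_\infty$ is not connected.

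The only real obstacle is one of presentation: if the authors prefer to avoid citing the locally-path-connected-plus-connected theorem, the alternative is to argue separation directly. Take the arc $a\subseteq L_\infty$ constructed in the proof of Theorem 4.3, connecting the left exit to the right exit; concatenating $a$ with the two boundary arcs of $Q$ that join its endpoints through the bottom and through the top produces a simple closed curve in $Q$ which, by the Jordan curve theorem, separates $Q$ into two open regions. The black-corner points $x$ and $y$ from the proof of Theorem 4.3 lie in different regions, and intersecting those regions with $Q\setminus L_\infty$ exhibits the desired disconnection.
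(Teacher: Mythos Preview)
Your primary argument is correct and is essentially the same as the paper's: both establish that $Q\setminus L_\infty$ is locally path connected and then invoke the standard fact that for locally path connected spaces connectedness coincides with path connectedness, contradicting Theorem~4.3. Your route to local path connectedness (openness of $Q\setminus L_\infty$ in the locally path connected space $Q$) is slightly more formally packaged than the paper's direct small-ball argument, but the two are the same in substance.
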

\begin{proof}
Claim: $Q\setminus L_{\infty}$ is locally path connected.\\
Let $x\in Q\setminus L_{\infty}$,  and $U$ is any open set containing $x$, then for sufficiently small $r$ there exist an open ball containing $x$ and contained in $U$ in subspace topology and clearly it will be path connected. Hence  $Q\setminus L_{\infty}$ is locally path connected.\\
Since every locally path connected space is connected if and only if the space is path connected, $Q\setminus L_{\infty}$ is not connected. (By Theorem 4.3)
\end{proof}
\begin{coro}
$Q\setminus L_n$ is not path connected and not connected for any $n\ge 1$.
\end{coro}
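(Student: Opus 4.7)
The plan is to replay the proofs of Theorem 4.3 and its Corollary at stage $n$ rather than in the limit, which is legitimate because Proposition 2.4 tells us that $\mathcal W_n$ is itself an $m^n\times m^n$-quadrilateral labyrinth set; in particular the tree, exit, and corner properties are all available at stage $n$, and the graph $\mathcal G(\mathcal W_n)$ is a tree whose edges are common-side adjacencies.

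Concretely, I would first invoke Proposition 3.1 together with the corner property to select a non-corner left exit $W_L\in\mathcal W_n$; the exit property then automatically produces a matching right exit $W_R\in\mathcal W_n$ with both indices nonzero, so $W_R$ is non-corner as well. The tree property of $\mathcal G(\mathcal W_n)$ yields a unique graph path $W_L=B_0,B_1,\ldots,B_k=W_R$, which I would realize geometrically as a continuous arc $P\subseteq L_n$ starting at a point on the left side of $Q$ interior to $W_L$ and ending at a point on the right side interior to $W_R$, by threading through the centres of the $B_i$'s along shared edges.

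Next, since $W_L$ and $W_R$ are non-corner, the arc $P$ together with an appropriate subarc of $\partial Q$ forms a simple closed curve that, by the Jordan curve theorem, cuts $Q$ into an ``upper'' region containing both top corners and a ``lower'' region containing both bottom corners. The corner property applied to $\mathcal W_n$ forces at least one top corner and at least one bottom corner to be black, so I would pick $x$ in the interior of a black top corner and $y$ in the interior of a black bottom corner; both lie in $Q\setminus L_n$. Any continuous path in $Q$ from $x$ to $y$ must cross $P\subseteq L_n$, so no such path can stay inside $Q\setminus L_n$, establishing that $Q\setminus L_n$ is not path connected.

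For the ``not connected'' half I would mimic Corollary 4.2 essentially verbatim: $L_n$ is closed, hence $Q\setminus L_n$ is open in $Q\subseteq\mathbb R^2$, and therefore locally path connected (small enough Euclidean balls, or half-balls at boundary points of $Q$, are path connected). Since a locally path connected space is connected if and only if it is path connected, the failure of path-connectedness just established forces $Q\setminus L_n$ to be disconnected. The main obstacle I anticipate is the separation step---verifying rigorously that the arc $P$ actually isolates the two top corners from the two bottom corners---but this is the same geometric content already used in the proof of Theorem 4.3 and can be handled identically there, either by a direct appeal to the Jordan curve theorem or simply by citation.
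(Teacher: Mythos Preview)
Your argument is correct. The paper, however, states this corollary with no proof at all, evidently regarding it as immediate from Theorem~4.3 and its corollary on $Q\setminus L_\infty$: the very same points $x,y$ (chosen in the interiors of stage-$1$ black corner quadrilaterals) already lie in $Q\setminus L_n$ because $L_n\subseteq L_1$, and the same separating arc $a\subseteq L_\infty$ lies in $L_n$ because $L_\infty\subseteq L_n$; the local path-connectedness step is identical. So the paper's route is simply to reuse the objects of Theorem~4.3 together with the nesting $L_\infty\subseteq L_n\subseteq L_1$, rather than to rebuild everything at level $n$.

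Your approach---invoking Proposition~3.2 (you wrote ``Proposition~2.4'') to transfer the labyrinth properties to $\mathcal W_n$ and then rerunning the whole Theorem~4.3 argument there---is more laborious but has the advantage of being entirely finitary: it never touches $L_\infty$, so it would survive in contexts where the limit object has not yet been analysed. One small wording issue: you should not say ``select a non-corner \emph{left} exit,'' since the left exit of $\mathcal W_n$ might well be a corner; rather, as in the paper, first select some non-corner exit and then relabel so that it is the left one.
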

\begin{theo}
If $\mathcal W_1$ is an $m\times m$ - quadrilateral labyrinth set in $Q$, $m\ge 4$, such that $\mathcal W_1$ contains no border quadrilateral except the exits, and $L_{\infty}$ is the quadrilateral labyrinth fractal generated from $\mathcal W_1$, then the number of connected components of $Q\setminus L_{\infty}$ is $4$.
\end{theo}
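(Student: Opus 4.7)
The plan is to show that $Q\setminus L_\infty$ has exactly four connected components by exhibiting two separating arcs in the dendrite $L_\infty$ (Theorem 4.2) that together divide $Q$ into four quadrants, one per corner of $Q$. My first step will be to pin down $L_\infty\cap\partial Q$. Under the hypothesis that no border quadrilateral except the four exits is white, an induction on $n$ gives that $L_n$ meets each side of $Q$ in a single closed sub-segment of length $m^{-n}$, namely the side on $\partial Q$ of the $n$-th iterated nested exit; all other border quadrilaterals remain black at every stage. In the limit, $L_\infty\cap\partial Q$ consists of four distinguished exit points $p_L,p_T,p_R,p_B$ (one on each side), possibly together with some corner vertices of $Q$. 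Consequently $\partial Q\setminus L_\infty$ decomposes naturally into four corner arcs $\alpha_{TL},\alpha_{TR},\alpha_{BR},\alpha_{BL}$, one around each corner $Q_i$ of $Q$, where each $\alpha_i$ may be split into two sub-arcs by a corner of $Q$ lying in $L_\infty$.

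Next I would construct the separating arcs. Since $L_\infty$ is a dendrite, there is a unique simple arc $a_{LR}\subseteq L_\infty$ from $p_L$ to $p_R$ and a unique simple arc $a_{TB}\subseteq L_\infty$ from $p_T$ to $p_B$. The key point is that each exit point $p_X$ is an endpoint of the dendrite: a neighbourhood of $p_X$ in $L_\infty$ is, by the nested exit construction, a rescaled labyrinth attached to the rest of $L_\infty$ only through the other three exits of the enclosing iterated exit. Therefore $a_{LR}$ passes through neither $p_T$ nor $p_B$ and symmetrically for $a_{TB}$, and both arcs meet $\partial Q$ only at their endpoints. Since the four points lie cyclically around $\partial Q$, standard planar topology (two arcs in a disk with alternating endpoints on the boundary must intersect) gives $a_{LR}\cap a_{TB}\neq\emptyset$, so $a_{LR}\cup a_{TB}$ is a planar tree with four leaves on $\partial Q$, subdividing $Q$ into exactly four open regions $Q^{TL},Q^{TR},Q^{BR},Q^{BL}$.

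For the counting step, since $a_{LR}\cup a_{TB}\subseteq L_\infty$, I obtain $Q\setminus L_\infty\subseteq Q\setminus(a_{LR}\cup a_{TB})=Q^{TL}\cup Q^{TR}\cup Q^{BR}\cup Q^{BL}$ as a disjoint union, so each component of $Q\setminus L_\infty$ lies in one quadrant. Each quadrant borders a corner arc of $\partial Q\setminus L_\infty$, so each contributes at least one component, giving at least four. Conversely, Corollary 4.1 gives that any $x\in Q\setminus L_\infty$ is connected in $Q\setminus L_\infty$ to a boundary point by an arc that cannot cross $a_{LR}\cup a_{TB}$ and hence stays within a single quadrant. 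It then remains to verify that the corner arc inside each quadrant is itself path-connected in $Q\setminus L_\infty$: if its associated corner $Q_i$ lies in $L_\infty$, the two sub-arcs are joined by a short detour through black quadrilaterals near $Q_i$ inside the quadrant, whose existence follows from iterated application of the corner property (analogous to the detour arguments in Proposition 4.2). Thus each quadrant contributes exactly one component, giving exactly four in total.

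The hardest part will be the planar-topology step of the second paragraph: verifying that each $p_X$ is indeed an endpoint of the dendrite (so $a_{LR}$ and $a_{TB}$ avoid $\partial Q$ except at their endpoints), that the two arcs must intersect in a connected subset, and rigorously deducing that $Q\setminus(a_{LR}\cup a_{TB})$ has exactly four components with the correct correspondence to the corner arcs. A secondary subtlety is the detour argument when corners of $Q$ lie in $L_\infty$, which requires unpacking the recursive corner property carefully to guarantee enough black sub-quadrilaterals near $Q_i$ to route around it.
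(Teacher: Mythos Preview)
Your approach is correct in outline and shares the same core idea as the paper's proof --- use separating arcs in $L_\infty$ between opposite exits to show no component can contain two corners, and use Corollary~4.1 to show every component reaches the boundary and hence a corner --- but you make it substantially harder than necessary by not first proving the paper's Claim~1: under the hypothesis that only exits are border quadrilaterals, \emph{no corner quadrilateral can be white}. The argument is short: if a corner were white it would have to be an exit, its unique white neighbour would be a border quadrilateral and hence another exit, and for $m\ge 4$ this forces two distinct left (or right) exits, contradicting uniqueness. Once you have this, $L_\infty\cap\partial Q$ is exactly $\{p_L,p_T,p_R,p_B\}$, all four corners of $Q$ lie in $Q\setminus L_\infty$, and your entire ``secondary subtlety'' about detouring around corners that might lie in $L_\infty$ disappears.

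With that simplification in hand, the paper proceeds more directly than you do: rather than building the full cross $a_{LR}\cup a_{TB}$ and invoking planar topology to get exactly four Jordan regions, it argues componentwise. Each vertex of $Q$ lies in some component; if a component contained two vertices, the path between them in $Q\setminus L_\infty$ would have to cross the appropriate exit-to-exit arc in $L_\infty$ (this is just the argument of Theorem~4.3), so each component contains at most one vertex; and by Corollary~4.1 plus the fact that $\partial Q\setminus\{p_L,p_T,p_R,p_B\}$ is four arcs each containing a vertex, every component contains at least one vertex. Your route via $Q\setminus(a_{LR}\cup a_{TB})$ works too, and the dendrite structure does guarantee that $a_{LR}\cap a_{TB}$ is a point or an arc so that the union is a tree with four leaves on $\partial Q$, but it buys you generality you don't need here at the cost of the extra planar-topology verification you flag as the hardest part.
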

\begin{proof}
Claim 1: $\mathcal W_1$ cannot have a corner quadrilateral.\\
Suppose there exist a corner quadrilateral, say $W_0$ in $\mathcal W_1$. Let $W_0$ be the top-left corner. Then it should be either left-exit or top-exit. WLOG suppose $W_0$ is the top-exit. Since $\mathcal W_1$ is connected, there exit a border quadrilateral, say $W_1$, which is a neighbour of $W_0$. Since $W_1$ is a border quadrilateral, it should be an exit. Also $W_1$ cannot be a top exit, so $W_1$ is a left exit. Since top-left corner is the top-exit, bottom-left corner should be the bottom exit. Same as argument above there exist a left exit, say $W_2$, which is a neighbour of bottom left corner. Since the left exit is unique, only possibility is $W_1=W_2$. But this case holds only if $m=3$. So $\mathcal W_1$ contains no corner quadrilateral. 
Hence each vertex of $Q$ is in $Q\setminus L_{\infty}$.\\
Claim 2: A connected component of $Q\setminus L_{\infty}$ does not contain more than one vertex of $Q$. \\
Suppose not. Let $K$ be a connected component of $Q\setminus L_{\infty}$ which contains two corner vertices. WLOG assume top-left vertex and top-right vertex are contained in $K$. Since $Q\setminus L_{\infty}$ is locally path connected, the path connected components and connected components of $Q\setminus L_{\infty}$ coincides. Thus $K$ is a path connected component containing both top-left vertex and top-right vertex and there exist a path in $K$ between these two vertices. But this path will intersect with the path from top exit to bottom exit as shown in Theorem $4.1$, and it is not possible. Hence a connected component of $Q\setminus L_{\infty}$ contains atmost one vertex of $Q$.\\
Claim 3: Each connected component of $Q\setminus L_{\infty}$ contains atleast one vertex. \\
From Corollary $4.1$, it is clear that there exist a path from any point of $Q\setminus L_{\infty}$ to one of the boundary point of $Q$. Also from any border point in $Q\setminus L_n$, there exist a path to one of the vertices of $Q$, if not, it will be a contradiction to the hypothesis of theorem. Hence each components of $Q\setminus L_n$ contains atleast one vertex of $Q$.\\
Hence corresponding to each vertex there is a unique connected component in $Q\setminus L_{\infty}$ and so  the number of connected components of $Q\setminus L_{\infty}$ is $4$.
\end{proof}
\section{CONCLUSIONS}
In this paper, labyrinth fractals on a convex quadrilateral are investigated. On any convex quadrilateral, labyrinth fractals are defined in the same manner as in a  square or a triangle. However, the labyrinth fractal is not self-similar as the construction of smaller quadrilaterals lacks self-similarity property. 
\par
 The topological properties of both quadrilateral labyrinth fractal and its complement on the quadrilateral are also studied in this paper. It is found that quadrilateral labyrinth fractal is the first subcategory subset in $\mathbb{R}^2$, and it is a dendrite. In future, many properties related to a fractal such as  various dimensions shall be investigated.

\bibliographystyle{amsplain}

\end{document}